\crefname{hypothesis}{Hypothesis}{Hypotheses}
\crefname{corollary}{Corollary}{Corollary}
\title{An optimization parameter for seriation of noisy data\thanks{Submitted to the editors DATE.
\funding{The authors gratefully acknowledge supports from UDRF and NSERC.}}}
\author{Mahya Ghandehari\thanks{Department of Mathematical Sciences, University of Delaware, Newark, DE
  (\email{mahya@udel.edu}).}
\and Jeannette Janssen\thanks{Department of Mathematics and Statistics, Dalhousie University, Halifax, Canada
  (\email{Jeannette.Janssen@dal.ca}).}}
\begin{document}

\newcommand{\Nnn}{{\mathbb N}}
\newcommand{\Ppp}{{\mathbb P}}
\newcommand{\Eee}{{\mathbb E}}
\newcommand{\Zzz}{{\mathbb Z}}
\newcommand{\Qqq}{{\mathbb Q}}
\newcommand{\Rrr}{{\mathbb R}}
\newcommand{\va}{{\mathbf a}}
\newcommand{\vv}{{\mathbf v}}
\newcommand{\xx}{{\mathbf x}}
\newcommand{\uu}{{\mathbf u}}
\newcommand{\UR}{{\rm UR}}
\newcommand{\LL}{{\rm LL}}
\newcommand{\ext}{{\rm ext}}
\newcommand{\cB}{{\mathcal B}}
\newcommand{\cC}{{\mathcal C}}
\newcommand{\cD}{{\mathcal D}}
\newcommand{\cA}{{\mathcal A}}
\newcommand{\cW}{{\mathcal W}}
\newcommand{\gm}{\Gamma_{1}}
\newcommand{\An}{{\mathcal A}_n}
\newcommand{\jj}[1]{{\bf [~Jeannette:\ } {\em #1}{\bf~]}}
\newcommand{\blue}[1]{#1}
\newcommand{\pink}[1]{#1}

\maketitle

\begin{abstract}
 \blue{A square symmetric} matrix is a Robinson similarity matrix if entries in its rows and columns are non-decreasing when moving towards the diagonal. A Robinson similarity matrix can be viewed as the affinity matrix between objects arranged in linear order, where objects closer together have higher affinity. We define a new parameter, $\gm$, which measures how badly a given matrix fails to be Robinson similarity. Namely, a matrix is Robinson similarity precisely when its $\gm$ attains zero, and a matrix with small $\gm$ is close (in the normalized $\ell^1$-norm) to a Robinson similarity matrix. Moreover, both $\gm$  and the Robinson similarity approximation can be computed in polynomial time. Thus, our parameter recognizes Robinson similarity matrices which are perturbed by noise, and can therefore be a useful tool in the problem of seriation of noisy data. 
\end{abstract}

\begin{keywords}
 Robinson similarity matrices, Robinsonian matrices, unit interval graphs, seriation, linear embeddings of graphs
\end{keywords}

\begin{AMS}
  68R01, 05C85, 05C50
\end{AMS}

\section{Introduction}
Many real-life networks, such as online social networks, biological networks and neural networks, are manifestations of an underlying (hidden) spatial reality. For example, members of a social network can be identified with nodes placed in a metric space, whose coordinates represent the interests, backgrounds, and other significant features of the users. The formation of the network is then modeled as
a stochastic process, where the probability of a link occurring between two nodes decreases as their
metric distance increases. A fundamental and challenging problem in the analysis of a social network (or any
other spatial network) is to uncover its ``hidden spatial layout'', {\it i.e.}~to identify the metric space
representation of the network. Analysis and visualization of data becomes considerably more tractable when the dataset is presented according to its spatial reality.

The classical seriation problem, introduced by Robinson in \cite{robinson}, can be viewed as the special case of the spatial layout problem, restricted to one dimension.
The objective of the seriation problem is to order a set of items so that similar items are placed closer to each other. The seriation question translates in a natural way into a question regarding symmetric matrices.
A symmetric matrix is a \emph{Robinson similarity } matrix, or \emph{Robinson matrix} for short, if its entries are non-decreasing when moving towards the main diagonal in each row or column. A symmetric matrix $A$ is said to be  \emph{Robinsonian}, if it becomes a Robinson matrix after simultaneous application of a permutation $\pi$ to its rows and columns. In that case, the permutation $\pi$ is called a \emph{Robinson ordering}  of $A$. If the entries of the symmetric matrix $A=[A_{i,j}]$ represent  similarity of items $i$ and $j$,  then the  Robinson ordering represents a linear arrangement of the items so that similar items are placed closer together.

The problem of recognizing  Robinsonian  matrices, and finding their Robinson orderings, can be solved in polynomial time.  See \cite{mirkin-rodin} for the first polynomial time algorithm for this problem, and \cite{seston,prea-fortin,laurent2016,laurent2017} for more recent efficient algorithms. Most of these algorithms are based on a similar principle; namely the connection between Robinsonian similarity matrices and unit interval graphs (\cite{laurent2016,laurent2017}) or interval (hyper) graphs (\cite{mirkin-rodin,seston,prea-fortin}). A spectral algorithm based on reordering the matrix according to the components of the second eigenvector of the Laplacian, or the Fiedler vector, was given in \cite{atkins}, and was then applied to the \emph{ranking problem} in \cite{fogel}.

The seriation problem has diverse and significant applications, from its origin in archeological studies to recent applications to ecology and sociology. For a historical overview of the problem and its diverse applications, see \cite{Liiv2010}. 
In most of these applications, it is natural to expect the data to be noisy. In that case, the optimal reordering of a data-derived matrix may not be itself a Robinson matrix, but it will be close to one. The question then becomes, to which extent a given matrix resembles a Robinson matrix. 
All of  the algorithms mentioned in the previous paragraph only apply  to noise-free Robinsonian similarity matrices. 

In the presence of noise, the goal of the seriation problem is to find an ``almost Robinson'' ordering of a given matrix, {\it i.e.} an ordering for which the reordered matrix is closest to being Robinson. This question turned out to be much more challenging than the error-free analogue. In fact,  it is shown in \cite{chepoi-fichet} that the problem of finding a reordering and a Robinson matrix which is the best $\ell^\infty$-approximation  is NP-hard. In \cite{Chepoi2011}  a factor 16  approximation algorithm is given for the case of $\ell^\infty$. NP-hardness for a number of related problems is established in \cite{Barthelemy-Brucker}, where approximation by $\ell^p$ distance is considered. Specifically, it is shown that for an integer $p$, the problem of finding {\sl proper strong Robinson relations} within specified $\ell^p$ distances of a given matrix is NP-complete. A proper strong Robinson relation corresponds to an appropriate relabelling of the original matrix together with a Robinson matrix with certain additional, stronger properties. 
In \cite{Rigollet} (together with the references therein) a statistical approach to the problem of seriation with noise is developed, where the error of  Robinson approximation is measured by the Frobenius norm.

If the appropriate labelling is given, then the problem becomes more tractable. Now the problem is that of finding a Robinson matrix closest to a given matrix. For the $\ell^\infty$-norm, it is known that this problem can be solved in polynomial time since an explicit closed form for the
optimal solution can be easily given (cf.\ \cite{Seminaroti2016}). The problem of finding the best $\ell^1$-approximation can be formulated as a linear program: Minimize the linear function $\| A-R\|_1$ subject to the constraint that $R$ is Robinson similarity. The constraint can be expressed with $O (n^3)$ inequalities, and thus the problem can be solved in polynomial time.

In this article, we develop new methods and algorithms which can be used for seriation of noisy data. Our focus here is on formalizing the notion of a matrix being ``almost Robinson.'' To do so,  we introduce a parameter, which we call $\gm$, that measures how much the local structure of a matrix resembles being a Robinson similarity. Namely, $\gm$  sums the magnitude  of local violations to the Robinson similarity property, and achieves the value 0 precisely when the matrix is Robinson.  {This parameter is a natural tool for the seriation problem,} and has been used in practice as a heuristic to measure the amount
of deviation from a Robinson form (see \cite{Chen2002,Hahsler2008}).
Moreover,  $\gm$ is simple to formulate, and can be computed in linear time. The main goal of this paper is to show that if the number and magnitude of local violations is small, then the matrix is indeed close, in the sense of  $\ell^1$-norm, to  a Robinson matrix.  Precisely, we prove in Theorem \ref{thm:main} that for every given matrix $A$,  there exists a Robinson matrix $R$ so that $\|A-R\|_1\leq \blue{26} \gm(A)^{1/3}$.
In addition, we give a polynomial time algorithm to compute the Robinson approximation $R$ which fulfills the above inequality. 

A proposed application of this work is a novel scheme for treating the seriation problem of noisy data, in which the selection of the best permutation is guided by parameter $\gm$. The traditional formulation is: {\sl Given a matrix $A$, find a permutation  $\pi$ and a Robinson matrix $R$ so that $\| A^{\pi}-R\| _p$ is minimized.} (Here $A^{\pi}$ refers to the matrix obtained by permuting rows and columns of $A$ according to $\pi$.) This approach requires the simultaneous optimization of both the matrix $R$ and the permutation $\pi$.
Using the results in this paper, we can instead reduce the noisy seriation problem to the following problem:

\smallskip\noindent
{\sl Given a symmetric matrix $A$, find a permutation of its rows and columns so that $\gm (A)$ is minimized.} 

\smallskip
Once such a permutation is found, our algorithm can be used to compute the appropriate Robinson approximation. While this approach is not an approximation algorithm per se, we do bound the performance of this approach in terms of the optimal outcome. Namely, as will be shown in Lemma \ref{lem:BestApprox}, the best possible Robinson approximation has normalized $\ell_1$ distance at least $\frac14 \gm (A)$ from $A$.  Therefore, our results implicitly bound the Robinson approximation achieved by our algorithm in terms of the optimal solution.

The results of this article are fundamentally different from any previous results on $\ell^\infty$-fitting Robinsonian structures (see for example  \cite{chepoi-fichet}). Indeed, when matrices grow large in size, the $\ell^1$-norm provides us with a more suitable notion of ``closeness''.  This fact becomes apparent when we analyze a growing sequence of graphs which are convergent in the sense of Lov\'{a}sz-Szegedy \cite{lovasz-szegedy}. Indeed, this article (and the  choice of notation for the parameter $\gm$) was motivated by our previous work \cite{linearembeddings}, where we introduce a parameter $\Gamma$ which characterizes Robinson \emph{graphons}. Graphons are symmetric functions on $[0,1]^2$ with values in $[0,1]$, which can be thought of as the ``blueprint'' of a random graph whose vertices are randomly sampled from the interval $[0,1]$. A matrix $A=[A_{i,j}]$ can be interpreted as a graphon in a natural way, by splitting the unit square  into  subsquares of size $\frac{1}{n}\times \frac{1}{n}$, and setting the graphon equal to $A_{i,j}$ everywhere in the $(i,j)$-th subsquare. A \emph{Robinson graphon} is a graphon  which is non-decreasing along every horizontal or vertical line towards the main diagonal. 

Our main result in \cite{linearembeddings} is that $\Gamma$ becomes a continuous parameter, when the space of graphons is equipped with the box-norm. Therefore $\Gamma$ provides us with a parameter to measure Robinson resemblance, which can be efficiently approximated. The parameters $\Gamma_1$ and $\Gamma$ are closely related, even though box-norm continuity does  not hold for $\Gamma_1$ anymore. In future work \cite{GhandJanss2019}, we employ   these parameters simultaneously in order to develop (continuous) methods for seriation of noisy data. 

Finally, we mention applications to graphs and networks. 
Binary Robinson matrices correspond to unit interval graphs. More precisely, a graph is a unit interval graph if and only if the adjacency matrix is Robinsonian, that is, there exists a labelling of the vertices so that the resulting adjacency matrix is a Robinson matrix. The parameter $\gm$ can be directly applied to a (labelled) graph, and if $\gm$ is sufficiently small, our algorithm constructs  a unit interval graph that is close, in edit distance, to the original  graph. 

When applied to real-life networks, the parameter $\gm$ can be used to measure how closely the matrix conforms to a linear model. In previous work, the authors investigated this question in the context of graph limits \cite{linearembeddings,uniform}. In a  \emph{linear graph model} the vertices of the graph are placed on a line, and the links are formed stochastically so that vertices that are closer together are more likely to connect. The linear layout can refer to a time line, in case of graphs derived from archeology, or the food chain, in case of food webs. But a linear layout may also point to the presence of a strong hidden variable that influences link formation, such as hierarchy, in a professional social network, or age in a friendship graph. If a graph conforms to a linear model,  then we expect the adjacency matrix of the graph to be ``almost-Robinson.'' Parameter $\gm$ may therefore serve as a measure of the ``linearity" of a given network.   

The rest of this article is organized as follows. In Section \ref{sec:def}, we introduce the necessary notations and definitions, and we state our main result, namely Theorem \ref{thm:main}. In Section \ref{sec:Approximate-binary}, we present Algorithm \ref{Alg:01} which finds a Robinson approximation for the special case where $A$ is a binary matrix ({\it i.e.} a graph adjacency matrix). The values of every cell in the Robinson approximation is decided based on the entries of $A$ in the {upper right} and the {lower left} regions defined by that cell (see Figure \ref{fig:UL}).  Algorithm \ref{Alg:01} is very simple to state, however one needs to use careful approximations and counting tricks to prove that the algorithm generates an output which indeed is a good $\ell^1$-approximation for the input matrix.  Section \ref{sec:general} provides us with an adaptation of Algorithm \ref{Alg:01} to general matrices. This is indeed a natural generalization, as every matrix with entries in $[0,1]$ decomposes into a convex combination of binary matrices. Fortunately, the parameter $\gm$ distributes over such decompositions, even though it is not a linear parameter in general. This allows us to apply Algorithm \ref{Alg:01} to the components of the decomposition in stages.  
In Section \ref{sec:improve}, we develop an algorithm that represents a preprocessing step for Algorithm \ref{Alg:01}. This preprocessing step is designed to transform the input matrix $A$, so that  Algorithm  \ref{Alg:01} generates a better-approximating output matrix $R$.
The trade-off here is that the preprocessing step increases the complexity of the algorithm, which still remains polynomial. 
We finish the paper by some concluding remarks and future directions.

\section{Definitions and main results}\label{sec:def}
For a given positive integer $n$, let ${\mathcal A}_n$ denote the set of all symmetric $n\times n$ matrices with entries in $[0,1]$.
Note that the restriction on the range of the entries is not a limitation, since it can always be achieved by shifting and scaling of the matrix. 
A matrix $A\in {\mathcal A}_n$ is called \emph{binary} if it has entries only from $\{0,1\}$. We refer to the position in the $i$-th row and $j$-th column of $A$ as the $(i,j)$'th \emph{cell}. For a matrix $A$ of size $n$, we define its (normalized) $\ell^1$-norm to be 
$\|A\|_1=\frac{1}{n^2}\sum_{i,j=1}^n |A_{i,j}|$. 
\begin{definition}
An $n\times n$ symmetric  matrix $A$ is a \emph{Robinson matrix} if, for all $1\leq i<j<k\leq n$, 
\begin{equation}
\label{Rprop}
A_{i,j}\geq A_{i,k}\mbox{ and }A_{j,k}\geq A_{i,k}.
\end{equation}
\end{definition}
In this section, we define a parameter, denoted by $\gm$, which measures how badly a matrix fails to be Robinson. 
The choice of notation for $\gm$ is due to the fact that it simply adds the magnitude of violations to (\ref{Rprop}). 
Precisely, given a symmetric matrix $A$ of size $n$,
\[
\gm (A) = \frac{1}{n^3}\sum_{1\leq i<k<j\leq n} [A_{i,j}-A_{i,k}]_+\,+[A_{i,j}-A_{k,j}]_+,
\]
where $[x]_+=x$ if $x\geq 0$, and 0 otherwise.
It is clear that $\gm(A)=0$ if and only if $A$ is a Robinson matrix. Note also that, for a binary matrix $A$ which is not Robinson, $\gm (A)\geq \frac{1}{n^3}$. Moreover,  the computation 
of $\gm (A)$ for an $n\times n$ matrix $A$ involves a simple summation which can be executed in $O(n^3)$ steps. Finally note that, due to the normalization factor $\frac{1}{n^3}$, $\gm (A)\in [0,1)$ whenever $A\in {\mathcal A}_n$.

We are now ready to state our main result (Theorem \ref{thm:main}), whose proof takes a large part of the paper and finishes only at the end of Section \ref{sec:improve}.

\begin{theorem}
\label{thm:main}
For every $A\in{\mathcal A}_n$, there exists a Robinson matrix $R\in {\mathcal A}_n$ so that
\[
\|A-R\|_1\leq \blue{26}\, \gm(A)^{1/3}.
\]
Moreover, $R$ can be computed in polynomial time. In addition, if $A$ is binary, then
there exists a binary matrix $R$ satisfying the conditions of the theorem.
\end{theorem}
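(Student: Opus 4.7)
The plan is to follow the three-stage construction hinted at in the introduction: first a binary algorithm (Section \ref{sec:Approximate-binary}), then an extension to $\mathcal{A}_n$ via level-set decomposition (Section \ref{sec:general}), and finally a preprocessing step (Section \ref{sec:improve}) to sharpen the constant to $26$. For a binary $A$ and a cell $(i,j)$ with $i<j$, I would associate the upper-right count $u_{i,j}=\#\{(i',j'):i'\leq i,\ j'\geq j,\ A_{i',j'}=1\}$ and the lower-left count $\ell_{i,j}=\#\{(i',j'):i\leq i'<j'\leq j,\ A_{i',j'}=1\}$. In a Robinson matrix, $A_{i,j}=1$ forces every LL-cell to equal $1$ and $A_{i,j}=0$ forces every UR-cell to equal $0$, so each mismatch between $A_{i,j}$ and its ``expected'' value produces witnesses in the UR or LL region whose violations of property~(\ref{Rprop}) are counted by $\gm(A)$.

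The algorithm should decide $R_{i,j}$ by thresholding a suitable functional of $u_{i,j}$ and $\ell_{i,j}$ at some level $\tau$. The functional is chosen so that, using the monotonicity of the UR and LL regions under diagonal shifts of $(i,j)$, moving $(i,j)$ toward the diagonal can only flip $R$ from $0$ to $1$, which makes $R$ automatically Robinson. For the error, I would partition disagreement cells into those for which $u_{i,j}$ or $\ell_{i,j}$ is far (at distance at least roughly $\tau$ times the region size) from the value predicted by the Robinson property, and those close to the threshold. A cell of the first type combines, via its witnesses, with $\Omega(\tau n)$ other entries to generate positive triples in $\gm(A)$; a double counting then bounds the number of such cells by $O(\gm(A)\,n^2/\tau^2)$. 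Together with an $O(\tau n^2)$ bound on cells of the second type, this yields $\|A-R\|_1\leq C_1\tau + C_2\gm(A)/\tau^{2}$, and the optimal choice $\tau\sim\gm(A)^{1/3}$ produces an $\ell^{1}$ error of order $\gm(A)^{1/3}$.

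For general $A\in\mathcal{A}_n$, I would use the layer-cake decomposition $A=\int_{0}^{1} A^{(t)}\,dt$ with $A^{(t)}_{i,j}=\mathbf{1}[A_{i,j}\geq t]$, and verify the identity $\gm(A)=\int_{0}^{1}\gm(A^{(t)})\,dt$ via $[x-y]_{+}=\int_{0}^{1}\mathbf{1}[y<t\leq x]\,dt$. Applying the binary procedure at each level and integrating gives $R\in\mathcal{A}_n$, which remains Robinson (pointwise monotonicity survives integration) and satisfies the $\ell^{1}$ bound via Jensen's inequality applied to the concave function $x\mapsto x^{1/3}$. To push the constant down to the stated $26$, the preprocessing of Section \ref{sec:improve} modifies $A$ on cells with jointly anomalous $u_{i,j}$ and $\ell_{i,j}$ before the main algorithm is invoked, eliminating boundary-type losses in the binary analysis.

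The main obstacle I anticipate is the sharp double counting in the binary step: to get the cube-root exponent with a small constant, one must show that each misassigned cell contributes quantitatively many \emph{distinct} triples to $\gm(A)$, avoiding triples that are also charged to neighbouring misassigned cells. A secondary subtlety is designing the thresholded rule so that $R$ is \emph{simultaneously} Robinson and close to $A$, since naive thresholds applied to $u_{i,j}$ or $\ell_{i,j}$ alone do not both preserve monotonicity and control the error. Once such a rule is in hand, the level-set lifting and the preprocessing refinement should be essentially routine.
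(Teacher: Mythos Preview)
Your overall three-stage architecture (binary algorithm, layer decomposition, preprocessing) matches the paper, and the layer-cake step together with Jensen is exactly what the paper does. But there is a genuine gap in the binary analysis, and a consequent misreading of what the preprocessing is for.

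\medskip
\textbf{The binary bound.} Your claimed trade-off $\|A-R\|_1\leq C_1\tau + C_2\gm(A)/\tau^{2}$, optimised at $\tau\sim\gm(A)^{1/3}$, is not what the paper obtains, and your charging argument does not establish it. The paper's threshold rule is simply $R_{i,j}=1$ iff $1_{\UR}(i,j)\geq t$; the relevant lower-left quantity is $0_{\LL}(i,j)$, the number of \emph{zeros} (not ones) in $\LL(i,j)$. The error analysis (Theorem~\ref{thm:algo1}) is a covering argument on the boundary of the black region, not a per-cell charging to $\gm$-triples: it gives $\|A-R\|_1\leq O(\sqrt{t}/n)$ \emph{provided} that every cell has $1_\UR<t$ or $0_\LL<t$ (Condition~(\ref{prop:URxLL})). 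Since $1_\UR(i,j)\cdot 0_\LL(i,j)\leq 2n^4\gm(A)$ for all cells (Lemma~\ref{lem:0-1count}), the smallest admissible $t$ is $\sim n^{2}\gm(A)^{1/2}$, and the direct binary algorithm therefore only yields $\|A-R\|_1=O(\gm(A)^{1/4})$ (Corollary~\ref{cor:algo1withbound}). Your proposed charging ``each misassigned cell contributes $\Omega(\tau n)$ distinct triples to $\gm(A)$'' is precisely the obstacle you flag at the end, and it does not go through as stated: a white cell $(i,j)$ with $A_{i,j}=1$ carries no information about violations involving $(i,j)$ itself --- the condition $1_\UR(i,j)<t$ concerns other cells entirely.

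\medskip
\textbf{The role of preprocessing.} In the paper the preprocessing is not a device to shave the constant down to $26$; it is what upgrades the exponent from $1/4$ to $1/3$. Algorithm~\ref{Alg:pre} repeatedly \emph{toggles} each ``inverted'' cell $(i,j)$ (one with both $1_\UR(i,j)\geq t$ and $0_\LL(i,j)\geq t$) by forcing $\UR(i,j)$ to $0$ and $\LL(i,j)$ to $1$. Lemma~\ref{lem:preproc} shows no new inverted cells are created, so the output $\widehat{A}$ satisfies Condition~(\ref{prop:URxLL}) for the chosen $t$, however small. Lemma~\ref{lemma:prep} shows the total $\ell^1$ cost of all toggles is at most $(4n^2/t)\,\gm(A)$, because each toggle drops $\gm$ by at least $1_\UR\cdot 0_\LL/n^4\geq t^2/n^4$ while changing at most $2(1_\UR+0_\LL)/n^2$ of the mass. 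One now optimises
\[
\|A-R\|_1\;\leq\;\|A-\widehat{A}\|_1+\|\widehat{A}-R\|_1\;\leq\;\frac{4n^2}{t}\gm(A)\;+\;\frac{16\sqrt{t}+4}{n}
\]
over $t$; the balance point is $t=4^{-2/3}\gm(A)^{2/3}n^{2}$, which gives $\|A-R\|_1\leq 26\,\gm(A)^{1/3}$. Without this step the cube-root exponent is not available.
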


The distance between $A$ and the Robinson  approximation obtained in Theorem \ref{thm:main} is bounded in terms of $\gm (A)$, and thus the algorithm does not necessarily give the best possible approximation. However, as stated in the following simple lemma, there is a close relationship between $\gm (A)$ and the distance between $A$ and the best possible Robinson approximation. 

\begin{lemma}\label{lem:BestApprox}
For every pair of matrices $A$ and $R$ in ${\mathcal A}_n$, if $R$ is a Robinson matrix then
$$\| A-R\|_1\geq \frac14 \gm (A).$$
Consequently, we have
$$\min\left\{\| A-R\|_1: R\in {\mathcal A}_n \mbox{ is Robinson similarity}\right\}\geq \frac14 \gm (A).$$
\end{lemma}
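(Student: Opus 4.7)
My plan is to bound each local violation appearing in the definition of $\gm(A)$ by entries of the difference matrix $A-R$, and then sum the bounds while carefully tracking how many times each entry gets used. The key observation is that the Robinson property for $R$ makes the ``positive parts'' vanish: for $1 \leq i < k < j \leq n$ we have $R_{i,j} \leq R_{i,k}$ and $R_{i,j} \leq R_{k,j}$, so that $[R_{i,j}-R_{i,k}]_+ = [R_{i,j}-R_{k,j}]_+ = 0$.

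First I would use the elementary decomposition
\[
A_{i,j} - A_{i,k} = (A_{i,j} - R_{i,j}) + (R_{i,j} - R_{i,k}) + (R_{i,k} - A_{i,k}).
\]
Since the middle term is non-positive, taking the positive part and applying the triangle inequality gives
\[
[A_{i,j} - A_{i,k}]_+ \leq |A_{i,j} - R_{i,j}| + |A_{i,k} - R_{i,k}|,
\]
and an identical argument handles $[A_{i,j} - A_{k,j}]_+ \leq |A_{i,j} - R_{i,j}| + |A_{k,j} - R_{k,j}|$. Summing these inequalities over all triples $1 \leq i < k < j \leq n$ reduces the problem to a bookkeeping exercise: for each fixed pair $(p,q)$ with $p < q$, the entry $|A_{p,q}-R_{p,q}|$ appears twice as an $(i,j)$-term for each $k \in (p,q)$, once as an $(i,k)$-term for each $j > q$, and once as a $(k,j)$-term for each $i < p$. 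The total multiplicity is $2(q-p-1) + (n-q) + (p-1) = n+q-p-3$, which is bounded by $2n$.

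Combining this with the symmetry identity $\sum_{p<q} |A_{p,q}-R_{p,q}| \leq \tfrac{n^2}{2} \|A-R\|_1$, I would obtain
\[
n^3\, \gm(A) \leq 2n \cdot \frac{n^2}{2}\, \|A-R\|_1 = n^3\, \|A-R\|_1,
\]
which in fact yields $\gm(A) \leq \|A-R\|_1$ --- even stronger than the stated bound with constant $\tfrac14$. The ``consequently'' clause then follows immediately by taking the infimum over all Robinson $R \in \An$.

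I do not anticipate any significant obstacle here. The argument is essentially a triangle inequality combined with a counting step; the only delicate part is the multiplicity computation, and even a very crude bound on the coefficient suffices to recover the claimed factor $\tfrac14$.
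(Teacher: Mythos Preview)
Your argument is correct and in spirit follows the paper's own proof: both exploit that $R$ Robinson forces $[R_{i,j}-R_{i,k}]_+=0$, reducing the problem to bounding terms of the form $|A_{p,q}-R_{p,q}|$ summed with multiplicity. The paper packages this as the two-step estimate $\gm(A)\leq \gm(A-R)\leq 4\|A-R\|_1$, where the second inequality uses the crude bound $[B_{i,j}-B_{i,k}]_+\leq |B_{i,j}|+|B_{i,k}|$ and then replaces the constrained sum over $i<k<j$ by a free sum over $k$. You instead compute the exact multiplicity $n+q-p-3\leq 2n$ for each off-diagonal entry and use that only the strict upper triangle contributes, which sharpens the constant from $\tfrac14$ to $1$; your bound $\|A-R\|_1\geq \gm(A)$ is genuinely stronger than what the paper states, at the cost of a slightly longer bookkeeping step.
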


\begin{proof}
Since the function $[\, \cdot\, ]_+$ is sub-additive, we observe that 
$$\gm(A)=\gm(A-R+R)\leq \gm(A-R)+\gm(R)=\gm(A-R),$$
as $R$ is a Robinson matrix and thus $\gm(R)=0$. Moreover, for every symmetric matrix $B$ of size $n$, we have 
\begin{eqnarray*}
\gm (B) &=& \frac{1}{n^3}\sum_{1\leq i<k<j\leq n} [B_{i,j}-B_{i,k}]_+\,+[B_{i,j}-B_{k,j}]_+\\
&\leq& \frac{1}{n^3}\sum_{1\leq i<k<j\leq n} |B_{i,j}|+|B_{i,k}|+|B_{i,j}|+|B_{k,j}|\\
&\leq& \frac{4}{n^3}\sum_{k=1}^n\sum_{1\leq i,j\leq n} |B_{i,j}|\leq 4\|B\|_1.\\
\end{eqnarray*}
Letting $B=A-R$ finishes the proof. 
\end{proof}

Applying Theorem \ref{thm:main} to binary matrices, we obtain an interesting corollary for graphs. For a graph $G$, let the \emph{augmented adjacency matrix} $B_G$ denote the matrix which is obtained from the adjacency matrix of $G$ by replacing its diagonal entries by 1. Then, $B_G$ is a Robinson matrix precisely when  the 1-entries of each row and each column are all consecutive. This is called the \emph{symmetric consecutive ones property} ({\it i.e.} C1P for rows and columns simultaneously), and it is known to  characterize \emph{unit interval graphs}, or equivalently as shown in \cite{Roberts1969}, \emph{proper interval graphs} (see \cite{Corneil2004,Corneil1995,Looges1994}). Precisely, a graph is a unit interval graph if and only if there exists a linear order on its vertices with respect to which $B_G$ has the consecutive ones property, {\it i.e.}~is a Robinson matrix. 

The parameter $\gm$ of the augmented adjacency matrix counts the number of triples $(i,j,k)$, $1\leq i<j<k\leq n$, for which  vertices $i$ and $k$ are adjacent, but vertex $j$ is not adjacent to either $i$ or $k$. On the other hand, the (unnormalized) $\ell^1$-distance $\|B_G-{B}_{\widehat{G}}\|_1$ between the augmented adjacency matrices $B_G$ and $B_{\widehat{G}}$ of two labeled graphs $G$ and $\widehat{G}$ of the same order corresponds to the edit distance between the graphs themselves. The  \emph{edit distance} between two graphs $G$ and $\widehat{G}$, denoted by $ed(G,\widehat{G})$, is the minimum number of edge deletions and edge additions that need to be performed on $G$ to transfer it to $\widehat{G}$. Applying these concepts, Theorem \ref{thm:main} directly leads to the following corollary.

\begin{corollary}
\label{Cor:Graphs}
For every graph $G$  on vertex set $V=\{ 1,2,\dots ,n\}$, there exists a unit interval graph $\widehat{G}$ on vertex set $V$ so that 
\[ 
\frac{ed(G, \widehat{G})}{n^2}\leq \blue{26}\, \gm(G)^{1/3}.
\]
\end{corollary}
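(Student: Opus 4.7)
The plan is to specialize Theorem \ref{thm:main} directly to the binary matrix $A := B_G$. Since $B_G$ is a symmetric $0/1$ matrix in $\mathcal{A}_n$, the ``binary'' clause of Theorem \ref{thm:main} produces a binary Robinson matrix $R \in \mathcal{A}_n$ with $\|A - R\|_1 \leq 26\, \gm(A)^{1/3}$. Because the excerpt defines $\gm(G) := \gm(B_G)$, the right-hand side is $26\, \gm(G)^{1/3}$, which is already the bound we want; the remaining work is purely one of interpretation.

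To identify $R$ with the augmented adjacency matrix of some graph $\widehat{G}$ on $V$, I need $R$ to be binary (already guaranteed) and to have $1$s on the diagonal. The latter is free: in any Robinson matrix, the diagonal entry $R_{ii}$ dominates its row and column, so replacing $R_{ii}$ by $1$ whenever $R_{ii}<1$ preserves the Robinson property, and since $A_{ii}=1$ it can only decrease $\|A-R\|_1$. After this adjustment $R$ is a symmetric binary matrix with $1$s on the diagonal, so $R = B_{\widehat{G}}$ for a unique graph $\widehat{G}$ on $V$. The symmetric consecutive-ones characterization recalled right before the corollary statement then forces $\widehat{G}$ to be a unit interval graph.

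It only remains to translate the normalized matrix inequality into an edit-distance inequality. Since $B_G$ and $B_{\widehat{G}}$ are symmetric binary matrices that agree on the diagonal, each edge added or removed when passing from $G$ to $\widehat{G}$ accounts for exactly the two symmetric off-diagonal cells $(i,j)$ and $(j,i)$ where the matrices differ, so $\sum_{i,j}\bigl|(B_G)_{ij}-(B_{\widehat{G}})_{ij}\bigr| = 2\, ed(G,\widehat{G})$. Using the normalization $\|\cdot\|_1 = n^{-2}\sum_{i,j}|\cdot|$ and combining with the bound from Theorem \ref{thm:main} yields
\[
\frac{ed(G,\widehat{G})}{n^2} \;=\; \tfrac{1}{2}\|B_G-B_{\widehat{G}}\|_1 \;\leq\; 13\, \gm(G)^{1/3} \;\leq\; 26\, \gm(G)^{1/3},
\]
which is the claim. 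The only mildly subtle point in the argument is the diagonal adjustment in the middle step, which is needed to ensure that the algorithmic output of Theorem \ref{thm:main} is genuinely an augmented adjacency matrix; everything else is bookkeeping on top of Theorem \ref{thm:main} and the known characterization of unit interval graphs.
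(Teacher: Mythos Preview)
Your proof is correct and follows exactly the paper's approach: the paper simply states that the corollary ``directly'' follows from Theorem~\ref{thm:main} and gives no further argument. Your treatment is in fact more careful than the paper's---you explicitly handle the diagonal adjustment (note that the Robinson condition in the paper's definition involves only strictly off-diagonal indices $i<j<k$, so changing diagonal entries trivially preserves it), and you correctly track the factor of~$2$ coming from symmetry, which actually yields the stronger bound $13\,\gm(G)^{1/3}$ before you relax it to the stated one.
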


\section{Robinson similarity approximation for binary matrices}
\label{sec:Approximate-binary}
In this section, we present an algorithm that finds a Robinson approximation for the special case where $A$ is a binary matrix, and can thus be
 interpreted as the adjacency matrix of a graph. The algorithm can be intuitively understood as follows. We divide all cells of the matrix into \emph{black} and \emph{white} cells, and convert all zeros in the black cells to ones, and all ones in the white cells to zeros. The black region is \emph{convex around the diagonal}, in the sense that, if a cell is black, then so are all other cells closer to the diagonal in the same row or column. {In other words, the binary matrix whose support is precisely the black region is a Robinson matrix, which is indeed the Robinson approximation that the algorithm returns} (See Figure \ref{fig:black}).

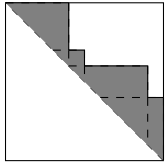
\begin{figure}
\begin{center}
\begin{tikzpicture}[scale=0.21]

\draw (0,0) -- (10,0) -- (10,10) -- (0,10) -- cycle;

\draw[dashed]  (10,0) -- (0,10);

\filldraw[fill=black!50, draw=black!50] (0,10) -- (4,10) -- (4,6) ;

\filldraw[fill=black!50, draw=black!50] (3,7) -- (5,7) -- (5,5) ;

\filldraw[fill=black!50, draw=black!50] (4,6) -- (9,6) -- (9,1) ;

\filldraw[fill=black!50, draw=black!50] (6,4) -- (10,4) -- (10,0) ;

\draw[dashed] (0,10) -- (4,10) -- (4,6) ;
\draw[dashed] (3,7) -- (5,7) -- (5,5) ;
\draw[dashed] (4,6) -- (9,6) -- (9,1) ;
\draw[dashed]  (6,4) -- (10,4) -- (10,0) ;

\draw (0,10) -- (4,10) -- (4,7) --(5,7) --(5,6)--(9,6)--(9,4)--(10,4)--(10,0);

\end{tikzpicture}
\end{center}
\caption{The black region is convex around the diagonal.}
\label{fig:black}
\end{figure}

The decision on whether to assign a cell to the black or white region depends on the entries in the \emph{upper right (UR)} and \emph{lower left (LL)} regions defined by the cell. 
Precisely, for any cell $(a,b)$, $1\leq a<b\leq n$,  we define
\begin{eqnarray*}
\UR(a,b)&=\{ (i,j):i<a<b<j\},\\
\LL(a,b)&=\{ (i,j):a\leq i\leq j\leq b\}.\\
\end{eqnarray*}
Roughly speaking, a cell will be black if it has enough ones in its upper right region, and it is white when it has 
enough zeros in its lower left region. So, we need the following notations (also shown in Figure \ref{fig:UL}):
\begin{eqnarray*}
1_{\UR}(a,b)&=&|\UR(a,b)\cap \{ (i,j):A_{ij}=1\}|,\\
0_{\LL}(a,b)&=&|\LL(a,b)\cap \{ (i,j):A_{ij}=0\}|.
\end{eqnarray*}
In addition, define $1_{\UR}(a,b)=0_{\LL}(a,b)=0$ when $a\in\{0,n+1\}$ or $b\in\{0,n+1\}$ or $a>b$. 
\begin{figure}[h]\label{fig:UL}
\begin{center}
\begin{tikzpicture}[scale=1.4]

\draw (0,0) -- (1.412,0) -- (1.412,-1.412) -- (0,-1.412) -- cycle;


\draw[dashed]  (0,0) -- (1.42,-1.42);

\draw (0, -.4) -- (1.412, -.4);

\draw (0, -.5) -- (1.412, -.5);


\draw (0.85, 0) -- (0.85, -.86); 




\draw (0.95, 0) -- (0.95, -0.94);

\filldraw[fill=blue!30, draw=blue!70!black] (0.95, -.4)  -- (1.412, -.4) -- (1.412, 0) -- (.95, 0);

\filldraw[fill=red!30, draw=red!70!black] (0.4, -.4)  -- (0.95, -.4) -- (.95, -0.94);

\draw (0, -.4) -- (1.412, -.4); 

\draw (0, -.5) -- (1.412, -.5);

\draw (0.85, 0) -- (0.85, -.86); 

\draw (0.95, 0) -- (0.95, -0.94); 

\filldraw[fill=green!30] (0.851, -0.398) -- (0.949, -0.398) -- (0.949, -0.498) -- (0.851, -0.498);

\node at (.87,.15) {\large $b$};


\node at (-.2,-.45) {\large $a$};


\end{tikzpicture}
\end{center}
\caption{Regions $\UR (a,b)$ (blue)  and $\LL (a,b)$ (red)}
\end{figure}
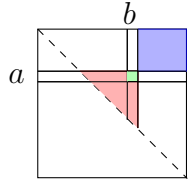


\begin{algorithm}
\caption{Robinson  approximation of a binary matrix\label{Alg:01}}
\begin{algorithmic}
\STATE{{\bf input:} binary matrix $A\in \An$, threshold \blue{$t>0$}}
\STATE{{\bf output:} binary Robinson  matrix $R\in \An$}
\FOR{$i\gets 1$ {\bf to} $n$}{
\blue{\STATE{$R_{1,i}\gets 0$; $R_{i,1}\gets 0$\;}
\STATE{$R_{i,n}\gets 0$; $R_{n,i}\gets 0$\;}}
}\ENDFOR
\FOR{$i \gets \blue{2}$ {\bf to} $n$}{
\STATE{ $j\gets \blue{n-1}$\;}
\WHILE{$j\geq i$}
\STATE{$1_{\UR}(i,j)\gets 1_{\UR}(i-1,j)+1_{\UR}(i,j+1)-1_{\UR}(i-1,j+1)+\blue{A_{i-1,j+1}}$\;}
\STATE{{\bf if} $1_{\UR}(i,j)<t$ {\bf then} $R_{i,j}\gets 0$; $R_{j,i}\gets 0$;}
\STATE{{\bf else} $R_{i,j}\gets 1$; $R_{j,i}\gets 1$}
\STATE{$j\gets j-1$;}
\ENDWHILE}
\ENDFOR
\RETURN $R$
\end{algorithmic}
\end{algorithm}

\medskip
Algorithm \ref{Alg:01} has complexity $\theta (n^2)$, and is therefore linear in the size of the input. A cell is considered black if $R_{i,j}$ is set to one, and white if $R_{i,j}$ is set to zero. If $(i,j)$ is black and 
$i\leq k<j$, then $\UR (i,j)\subset \UR (i,k)$, so $1_{\UR} (i,k)\geq 1_{\UR} (i,j)\geq t$, and thus $(i,k)$ is also black. Similarly, $(k,j)$ is also black. So, the region of black cells is convex around the diagonal, and $R$ is indeed a Robinson matrix. We will now show in Theorem \ref{thm:algo1} that the distance between $R$ and $A$ is bounded by a function of $t$ and $n$, \blue{if the parameter $t$ satisfies Condition (\ref{prop:URxLL}). 
We will then show in Corollary \ref{cor:algo1withbound} that an appropriate $t$ can be chosen as a function 
of $\gm (A)$, so that $\|A-R\|_1$ can be bounded in terms of $\gm (A)$.}

\begin{theorem}\label{thm:algo1}
Let $A\in\An$ be a binary matrix, with the property that 
\begin{equation}\label{prop:URxLL}
\text{for all }1\leq i\leq j\leq n,\, 1_\UR(i,j) < t\text{ or }0_\LL(i,j)< t.
\end{equation}
If $R$ is the matrix produced as output of Algorithm \ref{Alg:01} on input $A$  with threshold $t$, then $\|A-R\|_1\leq \frac{16\sqrt{t}+4}{n}$.
\end{theorem}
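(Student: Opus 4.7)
The plan is to bound $D := |\{(i,j): A_{i,j} \neq R_{i,j}\}|$; since $A$ and $R$ are binary and $\|A-R\|_1 = D/n^2$, it suffices to show $D \leq (16\sqrt{t}+4)n$. Write $D = |W_1| + |B_0|$ where $W_1 := \{(i,j): A_{i,j}=1, R_{i,j}=0\}$ and $B_0 := \{(i,j): A_{i,j}=0, R_{i,j}=1\}$. First I would dispose of the boundary: the algorithm explicitly sets $R_{i,j}=0$ on the first and last row and column, and (since $1_{\UR}(i,i)=0$ for all $i$) on the diagonal. These cells number at most $4n$, and they contribute only to $|W_1|$, so by the symmetry of $A$ and $R$ it will then suffice to bound the strict upper-triangular interior discrepancies $|W_1^+|$ and $|B_0^+|$ by roughly $8\sqrt{t}\,n$.

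The core combinatorial ingredient is a chain argument. Define the partial order $\prec'$ on cells by $(p,q) \prec' (p',q')$ iff $p<p'$ and $q>q'$, so that $(p,q) \prec' (p',q')$ is equivalent to $(p,q) \in \UR(p',q')$. For any chain $(p_1,q_1) \prec' \cdots \prec' (p_k,q_k)$ contained in $W_1^+$, every earlier element lies in $\UR(p_k,q_k)$ and has value $1$ in $A$, hence $1_{\UR}(p_k,q_k) \geq k-1$; combined with the white-cell condition $1_{\UR}(p_k,q_k) < t$, this forces $k \leq t$. The symmetric order $\prec$ with $(p,q) \prec (p',q')$ iff $p>p'$ and $q<q'$ places earlier chain elements inside $\LL(p_k,q_k)$; for a $\prec$-chain in $B_0^+$, all $k$ elements are zeros lying in $\LL(p_k,q_k)$, so $0_{\LL}(p_k,q_k) \geq k$, and the hypothesis (\ref{prop:URxLL}) combined with $1_{\UR}(p_k,q_k) \geq t$ yields $0_{\LL}(p_k,q_k) < t$ and therefore $k \leq t$.

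The main obstacle is that combining the chain length bound ($\leq t$) with the observation that $\prec$- and $\prec'$-antichains are just monotone sequences on the $n \times n$ grid of length at most $2n-1$ gives, via Mirsky's theorem, only $|W_1^+| + |B_0^+| = O(nt)$, which is too weak by a factor of $\sqrt{t}$. To close this gap I would introduce the threshold $d_0 = \sqrt{t}$ and split the discrepancies by distance from the diagonal $d = j - i$. Cells with $d < d_0$ lie in a band of total area at most $n \sqrt{t}$, giving a trivial $n \sqrt{t}$ bound there. For cells with $d \geq d_0$, the $\UR$ and $\LL$ regions are correspondingly larger, and a more careful counting sharpens the chain estimate: for instance, bounding $\sum_i a_i^2$, where $a_i$ is the number of white $1$s in row $i$, by $O(nt)$ via the $1_{\UR}$ constraints and then applying the Cauchy--Schwarz inequality $(\sum_i a_i)^2 \leq n \sum_i a_i^2$ gives $|W_1^+| = \sum_i a_i \leq n\sqrt{t}$, with an analogous treatment for $|B_0^+|$ using the $0_{\LL}$ constraints. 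Doubling by matrix symmetry to cover the lower triangle and adding the boundary term $4n$ then yields $D \leq (16\sqrt{t}+4)n$, which is equivalent to the claimed bound.
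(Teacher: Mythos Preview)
Your setup is fine and matches the paper: split the symmetric difference into $B_0$ (zeros in black cells) and $W_1$ (ones in white cells), and bound each above the diagonal. Your chain argument is also correct, and you rightly diagnose that Mirsky only yields $O(nt)$, a factor $\sqrt{t}$ short.

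The gap is in your proposed fix. You assert that $\sum_i a_i^2 = O(nt)$ ``via the $1_{\UR}$ constraints,'' but you never say how, and in fact this bound is false. Take $n$ even and let $A_{i,j}=1$ iff $i,j\geq n/2$ (so the lower-right quarter is all ones, the rest all zeros). For any cell $(i,j)$ with $i<n/2$ the region $\UR(i,j)$ lies entirely in zero rows, so $1_{\UR}(i,j)=0$; for $i\geq n/2$ the region $\LL(i,j)$ lies entirely in the all-ones block, so $0_{\LL}(i,j)=0$. Hence Condition~(\ref{prop:URxLL}) holds for any $t>0$. Now every cell $(n/2,j)$ with $j\geq n/2$ has $1_{\UR}(n/2,j)=0<t$, so it is white, and $A_{n/2,j}=1$, so $a_{n/2}\approx n/2$. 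Thus $\sum_i a_i^2 \geq n^2/4$, while for $t=\sqrt{n}$ we have $nt=n^{3/2}\ll n^2/4$. Your Cauchy--Schwarz step therefore cannot close the $\sqrt{t}$ gap; the distance-to-diagonal split does not rescue it, since almost all of these white ones sit at distance $\gg\sqrt{t}$ from the diagonal.

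The paper does something genuinely different: it works with the staircase boundary $\partial\cB$ (resp.\ $\partial\cW$), which has at most $2n$ cells, and selects along it a sparse subsequence of at most $2n/\sqrt{t}$ cells spaced at least $\sqrt{t}$ apart in row or column index. The $\LL$-regions of these selected cells, together with $\leq 2n/\sqrt{t}$ leftover squares of side $\leq\sqrt{t}$, cover the black region; Condition~(\ref{prop:URxLL}) guarantees each selected $\LL$-region contains fewer than $t$ zeros, so $|\cB_0|\leq (2n/\sqrt{t})\cdot t + (2n/\sqrt{t})\cdot t = 4n\sqrt{t}$. The white side is analogous with $\UR$-regions. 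This covering-along-the-boundary argument is the missing idea; neither the chain bound nor a row-by-row second-moment estimate can substitute for it.
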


\begin{proof}
As explained earlier, black cells are exactly the cells $(i,j)$ for which $R_{i,j}$ attains  1, {\it i.e.} the cells for which $1_{\UR}(i,j)\geq t$.  Let $\cB$ denote the collection of all black cells above the diagonal, that is 
$$\cB=\{(i,j): 1\leq i\leq j\leq n \mbox{ and } 1_{\UR}(i,j)\geq t\}.$$
Note that ${\cB}$ is convex around the diagonal, in the sense that if a cell $(i,j)$ belongs to $\cB$ then $\LL(i,j)\subseteq \cB$. However, the black region can be disconnected. Precisely, there can be diagonal cells not in $\cB$, in which case $\cB$ consists of a collection of connected regions which are convex around the diagonal. Note also that, by definition, $\cB$ cannot contain any cells $(i,j)$ so that $|\UR(i,j)|<t$, and specifically, $\cB$ cannot contain any cells from the first row or the last column.

Let $\partial \cB$ denote the set of boundary cells of $\cB$, {\it i.e.}
$$\partial (\cB)=\{(i,j)\in \cB:\ (i-1,j)\not\in \cB\ \mbox{ or }\ (i,j+1)\not\in \cB\}.
$$
The set $\partial \cB$ precisely contains all black cells above the diagonal that are adjacent to cells outside $\cB$. Thus, 
\begin{equation}\label{eq-cover}
\cB = \bigcup_{(i,j)\in \partial \cB}\ \LL (i,j).
\end{equation}
Similarly, the white region $\cW$ and its boundary cells are defined as 
$$\cW=\{(i,j): 1\leq i\leq j\leq n \mbox{ and } 1_{\UR}(i,j)< t\},$$
and 
$$\partial \cW =\{(i,j)\in \cW:\  (i+1,j)\not\in \cW\ \mbox{ or }\ (i,j-1)\not\in \cW\}.$$
%


\medskip
\noindent{\bf Claim 1.} Let ${\cB}_0=\{(i,j)\in {\cB}:\ A_{i,j}=0\}$. Then $|{\cB}_0|\leq 4n\sqrt{t}$
\begin{proof}[Proof of Claim 1] 
First, list elements of $\partial \cB$ as $(i_1,j_1),(i_2,j_2),\ldots, (i_m, j_m)$ in such a way that  $i_1\leq i_2\leq \ldots\leq i_m$, and if $i_l=i_{l+1}$ then $j_l<j_{l+1}$. This ordering follows the ``contour" of the black region, starting with the first black cell in the first row containing any black cells. Since both indices range from at least 1 to at most $n$, it is clear that the boundary contains at most $2n$ cells, and $m\leq 2n$.

To control  the amount of possible overlaps in the covering of $\cB$ in (\ref{eq-cover}), we now  construct a subsequence ${\cal C}$ of $\partial\cB$, so that the lower left regions of cells in $\cal{C}$ cover most of the black region. 
The subsequence  ${\cal{C}}=\{ (i_{n_k},j_{n_k}) \}$ is constructed inductively. In the first step, \blue{let $( i_{n_1},j_{n_1})$ be the last cell in the first row of $\cB$.}  At step $k\geq 1$, let $n_k$ be the largest index so that $(i_{n_k},j_{n_k})\in \cC$. \blue{Define $n$ to be the first index in $\{1,\ldots, m\}$ which satisfies $j_{n_k}<j_{n}$, $(i_n,j_{n}+1)\not\in\cB$, and either $i_{n}-i_{n_k}> \lfloor\sqrt{t}\rfloor$  or $j_{n}-j_{n_k}> \lfloor\sqrt{t}\rfloor $.} So cell $(i_n,j_n)$ is the first cell in $\partial \cB$ whose row or column index differs by at least $\lfloor\sqrt {t}\rfloor +1$ from the last cell added to $\cal{C}$, and also is the last black cell in its row. Set $n_{k+1}=n$, and add $(i_{n_{k+1}},j_{n_{k+1}})$ to $\cC$. Since $\partial\cB$ has at most $2n$ elements, and $\lfloor \sqrt{t}\rfloor +1 \geq \sqrt{t}$, this inductive process ends in at most $\frac{2n}{\sqrt{t}}$ steps.  So $|\cC|\leq \frac{2n}{\sqrt{t}}$.

We now claim that $\cB\setminus\bigcup_{(i,j)\in \cC} \LL(i,j)$ can be covered with $|\cC|$ squares of dimensions $\lfloor\sqrt{t}\rfloor\times \lfloor\sqrt{t}\rfloor$. Consider two consecutive elements of $\cC$, say $(i_{n_k},j_{n_k})$ and $(i_{n_{k+1}},j_{n_{k+1}})$. \blue{Let $(i,j)$ be the first cell in $\partial \cB\setminus\bigcup_{(i,j)\in \cC} \LL(i,j)$ after $(i_{n_k},j_{n_k})$.}  By construction, $(i,j)$ is not in the same row or column as $(i_{n_k},j_{n_k})$, and thus $i>i_{n_k}$ and $j>j_{n_k}$. \blue{Let $(i',j')$ be the last cell before $(i_{n_{k+1}},j_{n_{k+1}})$ in $\partial \cB\setminus\bigcup_{(i,j)\in \cC} \LL(i,j)$}. (If no such $n$ exists, then $k$ is the last index in $\cal{C}$, and we let $(i',j')=(i_m,j_m)$, the last cell of $\partial \cB$.)

By construction of 
$\cC$, we know that $i'-i+1\leq i'-i_{n_k}\leq  \lfloor\sqrt{t}\rfloor$ and $j'-j+1\leq j'-j_{n_k}\leq \lfloor\sqrt{t}\rfloor$. Moreover, since $(i,j)$ and $(i',j')$ belong to $\partial \cB$, and $\cB$ is convex around the diagonal, every cell $(a,b)\in\cB$
with  $i\leq a\leq i'$ must satisfy $j\leq b\leq j'$,  so it must belong to the 
square whose top-left corner is $(i,j)$, and its bottom-right corner is $(i',j')$. We denote this square by $S_{k}$, and note that $| S_{k}| \leq t$.
Thus,
$$\cB\subseteq \bigcup_{k=1}^{|\cal{C}|} \LL(i_{n_k},j_{n_k})\cup\bigcup_{k=1}^{|\cal{C}|} S_{k}.$$
By Condition (\ref{prop:URxLL}), every cell $(i,j)\in \cB$ satisfies $0_{\LL}(i,j)\leq t$. So,
\[
|\cB_0|\leq \sum_{(i,j)\in \cC} 0_{\LL}(i,j)+\sum_{(i,j)\in \cC} |S_{i,j}|
 \leq | \cC | t + | \cC | t= 4n\sqrt{t}.
\]
\end{proof}

\noindent{\bf Claim 2.} Let $\cW_1=\{(i,j)\in\cW: \ A_{i,j}=1\}$. Then $|\cW_1|\leq  4n\sqrt{t}+2n$.
\begin{proof}[Proof of Claim 2]
This proof is similar to the proof of the previous claim. Cells in the boundary $\partial \cW$ are enumerated similarly, and  a subsequence $\cD=\{(i_{n_k},j_{n_k})\}$ of $\partial \cW$ is defined analogous to $\cC$: \blue{let $( i_{n_1},j_{n_1})$ be the last cell in the first column of $\cW$}, and for $k\geq 1$, define $n_{k+1}$ to be the first index in $\{1,\ldots, m\}$ \blue{which satisfies  $i_{n_{k+1}}>i_{n_k}$, $(i_{n_{k+1}}+1,j_{n_{k+1}})\not\in\cW$,} and  either $i_{n_{k+1}}-i_{n_k}> \lfloor\sqrt{t}\rfloor$ or $j_{n_{k+1}}-j_{n_k}> \lfloor\sqrt{t}\rfloor $. Let $S_k$ be the square of size at most $\lfloor\sqrt{t}\rfloor \times \lfloor\sqrt{t}\rfloor$ covering all white cells between consecutive cells of $\cal{D}$. Then we have

$$\cW\subseteq \partial \cW \cup \bigcup_{(i,j)\in \cD} \UR(i,j)\cup\bigcup_{k=1}^{|\cal{D}|} S_{k}$$
Note that  dealing with the white region is slightly different from the black region, in the sense that $\UR (i,j)$ does not include cell $(i,j)$ and the cells in row $i$ and column $j$ (see Figure \ref{fig:UL}). So,  we include the boundary explicitly to cover the white region.

%
By definition of $\cW$, every white cell  $(i,j)\in \cW$ satisfies $1_{\UR}(i,j)<{t}$. This implies that 
\[
|\cW_1|\leq  |\partial\cW |+ \sum_{(i,j)\in \cD} 1_{\UR}(i,j)+\sum_{i=1}^{|\cD|} |S_{k}|\leq 2n+|\cD | t+ |\cD| t\leq 4n\sqrt{t}+2n.
\]
\end{proof}

The above two claims show that $R$ and $A$ differ above the diagonal in at most $8n\sqrt{t} +2n$ cells. Adding the region below the diagonal and normalizing, we conclude that
$\|A-R\|_1\leq  \frac{2(8\sqrt{t}+2)}{n}$.
\end{proof}

The following simple counting lemma provides a threshold, in terms of $\gm$, for which Condition (\ref{prop:URxLL}) always holds. 
\begin{lemma}\label{lem:0-1count}
Let $A\in {\cal A}_n$ be a binary matrix {whose diagonal entries are all 1. }
Then, for every cell $(i,j)$,
we have 
$$1_\UR(i,j) \ 0_\LL(i,j)\leq 2n^4\gm(A).$$
\end{lemma}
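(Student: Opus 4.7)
My plan is to interpret each ``bad'' pair---a 1-entry $(p,q) \in \UR(i,j)$ paired with a 0-entry $(p',q') \in \LL(i,j)$---as a witness to a Robinson violation counted by $\gm(A)$, and then bound the multiplicity of the resulting witness map. The first thing I would do is record the geometric constraints: the definitions give $p < i \leq p' \leq q' \leq j < q$, and the hypothesis that $A$ has $1$'s on the diagonal rules out $p' = q'$ (otherwise $A_{p',q'}$ would equal $1$). Hence every such pair satisfies $p < p' < q' < q$ strictly.

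The heart of the argument is a dichotomy on the intermediate entry $A_{p',q}$. If $A_{p',q} = 0$, then the triple $(p, p', q)$ satisfies $p<p'<q$ and $[A_{p,q} - A_{p',q}]_+ = 1$, contributing a unit to the ``column'' summand of $n^3 \gm(A)$. If instead $A_{p',q} = 1$, then the triple $(p', q', q)$ satisfies $p' < q' < q$ and $[A_{p',q} - A_{p',q'}]_+ = 1$, contributing a unit to the ``row'' summand. Either way I can associate to each bad pair a distinct witness triple, indexed so that the triple's entries and its type (row or column) together determine which summand of $n^3\gm(A)$ is being charged.

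Finally I would bound the multiplicity of this witness map. In the first case, a column-type triple $(a,b,c)$ pins down $(p,p',q)=(a,b,c)$ and leaves $q'$ free in $\{b, b+1, \ldots, j\}$, giving at most $n$ preimages. In the second case, a row-type triple $(a,b,c)$ pins down $(p',q',q)=(a,b,c)$ and leaves $p$ free in $\{1,\ldots, i-1\}$, again at most $n$ preimages. Summing over all triples yields
\[
1_\UR(i,j)\cdot 0_\LL(i,j) \;\leq\; n\!\!\sum_{a<b<c}\!\bigl([A_{a,c}-A_{b,c}]_+ + [A_{a,c}-A_{a,b}]_+\bigr) \;=\; n\cdot n^3\gm(A),
\]
which is in fact tighter than the stated $2n^4\gm(A)$. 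I do not foresee any real obstacle; the only subtle point is invoking the diagonal-$1$ hypothesis at exactly the right place---to upgrade $p' \leq q'$ to $p' < q'$---since otherwise the triple produced in the ``$A_{p',q}=1$'' branch could collapse and fail to be a valid index triple for $\gm$.
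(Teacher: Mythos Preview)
Your proof is correct, and in fact sharper than the paper's by a factor of $2$: you obtain $1_{\UR}(i,j)\cdot 0_{\LL}(i,j)\leq n^4\gm(A)$. The core idea is the same in both proofs---route each ``bad'' pair $\bigl((p,q),(p',q')\bigr)$ through an intermediate cell and charge the resulting violation to a term in $n^3\gm(A)$---but the execution differs. The paper writes
\[
[A_{s,t}-A_{s',t'}]_+\;\leq\;\tfrac12\Bigl([A_{s,t}-A_{s,t'}]_++[A_{s,t'}-A_{s',t'}]_++[A_{s,t}-A_{s',t}]_++[A_{s',t}-A_{s',t'}]_+\Bigr),
\]
i.e.\ it pivots through \emph{both} intermediate cells $(s,t')$ and $(s',t)$, averages, and then enlarges each of the four resulting sums over all quadruples, bounding each by $n^4\gm(A)$ to obtain $2n^4\gm(A)$. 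You instead pivot through the single intermediate cell $(p',q)$ and use the dichotomy on $A_{p',q}$ to assign each bad pair to exactly one typed witness triple; because for a given bad pair only one of the two terms $[A_{p,q}-A_{p',q}]_+$ and $[A_{p',q}-A_{p',q'}]_+$ is nonzero, your preimage count of at most $n$ per active witness yields the sum $n\cdot n^3\gm(A)$ directly. The diagonal hypothesis enters in the same place in both arguments (to ensure $p'<q'$, equivalently $s'<t'$, so that the row-type triple is a legitimate index for $\gm$). Your combinatorial framing is a genuine simplification; the paper's symmetric two-path splitting buys nothing here except the extra factor of $2$.
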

\begin{proof}
Without loss of generality, assume that $i<j$.
\begin{eqnarray*}
1_\UR(i,j) \ 0_\LL(i,j)&=&\sum_{{\scriptsize
\begin{array}{l}
(s,t)\in \UR (i,j) \\
(s',t')\in \LL (i,j)
\end{array}
}}[A_{s,t}-A_{s',t'}]_+\\
&\leq&\frac{1}{2}\sum_{{\scriptsize
1\leq s< i\leq s'\leq t'\leq j< t \leq n
}}  [A_{s,t}-A_{s,t'}]_++[A_{s,t'}-A_{s',t'}]_+ + [A_{s,t}-A_{s',t}]_++[A_{s',t}-A_{s',t'}]_+\\
&\leq &\frac{1}{2}\sum_{{\scriptsize
1\leq s< s'\leq t'< t \leq n
}}  [A_{s,t}-A_{s,t'}]_++ [A_{s,t}-A_{s',t}]_+\\
&+&\frac{1}{2}\sum_{{\scriptsize
1\leq s<s'\leq t'< t \leq n
}} [A_{s,t'}-A_{s',t'}]_+ +[A_{s',t}-A_{s',t'}]_+,\\
\end{eqnarray*}
where the second inequality can be justified using the fact that if  $[A_{s,t}-A_{s',t'}]_+=1$ then both $[A_{s,t}-A_{s,t'}]_++[A_{s,t'}-A_{s',t'}]_+=1$ and  $[A_{s,t}-A_{s',t}]_++[A_{s',t}-A_{s',t'}]_+=1$ are satisfied.
Now observe that 
$$\sum_{{\scriptsize
1\leq s< s'\leq t'< t \leq n
}}  [A_{s,t}-A_{s,t'}]_+\leq \sum_{s'=1}^n\sum_{{\scriptsize
1\leq s< t'< t \leq n
}}  [A_{s,t}-A_{s,t'}]_+\leq n^4\gm(A),$$
and similarly $\sum_{{\scriptsize
1\leq s< s'\leq t'< t \leq n
}}  [A_{s,t}-A_{s',t}]_+\leq n^4\gm(A).$ Moreover, for every $s$ and $s'$, we have $ [A_{s,s'}-A_{s',s'}]_+=0$, since $A_{s',s'}=1$. Thus, 
\begin{eqnarray*}
\sum_{{\scriptsize
1\leq s<s'\leq t'< t \leq n
}} [A_{s,t'}-A_{s',t'}]_+&=&\sum_{{\scriptsize
1\leq s<s'< t'< t \leq n
}} [A_{s,t'}-A_{s',t'}]_+\\\
&\leq& \sum_{t=1}^n\sum_{{\scriptsize
1\leq s<s'< t'\leq n
}} [A_{s,t'}-A_{s',t'}]_+\leq n^4\gm(A), 
\end{eqnarray*}
and similarly $\sum_{{\scriptsize
1\leq s<s'\leq t'< t \leq n
}}[A_{s',t}-A_{s',t'}]_+\leq n^4\gm(A).$ This finishes the proof. 
\end{proof}
\begin{corollary}\label{cor:algo1withbound}
For every binary matrix $A\in {\mathcal A}_n$, there exists a binary Robinson matrix $R\in {\mathcal A}_n$ such that 
$$\|A-R\|_1\leq {\frac{5}{n}+2^{9/2}\gm(A)^{1/4}}.$$
Moreover, $R$ can be computed in linear time.
\end{corollary}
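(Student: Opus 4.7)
The plan is to combine the two preceding results. Theorem \ref{thm:algo1} bounds $\|A'-R\|_1$ provided Condition (\ref{prop:URxLL}) holds for some threshold $t$; Lemma \ref{lem:0-1count} in turn controls the product $1_\UR(i,j)\,0_\LL(i,j)$ by $2n^4\gm(A)$, provided the diagonal is all ones. Choosing $t$ so that these two facts balance should yield the bound.

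First I would reduce to the case of a binary matrix with unit diagonal. Let $A'$ be obtained from $A$ by setting every diagonal entry to $1$. Because the triples in the definition of $\gm$ range over $i<k<j$, the diagonal entries are never involved, so $\gm(A')=\gm(A)$. On the other hand $\|A-A'\|_1\leq n/n^2=1/n$, and this perturbation will account for one unit in the final $5/n$ term.

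Next I would apply Lemma \ref{lem:0-1count} to $A'$ to obtain $1_\UR(i,j)\,0_\LL(i,j)\leq 2n^4\gm(A)$ for every cell. Consequently, for any threshold $t$ with $t^2>2n^4\gm(A)$, i.e.\ $t>s:=n^2\sqrt{2\gm(A)}$, one cannot simultaneously have $1_\UR(i,j)\geq t$ and $0_\LL(i,j)\geq t$, so Condition (\ref{prop:URxLL}) is satisfied for $A'$ at threshold $t$. Running Algorithm \ref{Alg:01} on $A'$ with such a $t$ produces a binary Robinson matrix $R_t$ with $\|A'-R_t\|_1\leq(16\sqrt{t}+4)/n$ by Theorem \ref{thm:algo1}. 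Since $1_\UR$ takes integer values, $R_t$ is locally constant in $t$ on a small right interval $(s,s+\eta)$; letting $t\downarrow s$ and using continuity of the bound, one obtains an $R$ with $\|A'-R\|_1\leq 16\sqrt{s}/n+4/n=16\cdot 2^{1/4}\gm(A)^{1/4}+4/n$. Combining with $\|A-A'\|_1\leq 1/n$ via the triangle inequality yields $\|A-R\|_1\leq 16\cdot 2^{1/4}\gm(A)^{1/4}+5/n$, and since $16\cdot 2^{1/4}=2^{17/4}\leq 2^{9/2}$ the stated inequality follows.

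The main subtle point, and really the only nonroutine step, is the limit argument: one has to observe that the algorithm's output varies only when $t$ crosses an integer value of some $1_\UR(i,j)$, so that the $R$ obtained at $t=s^+$ is a genuine binary Robinson matrix realizing the boundary bound $(16\sqrt{s}+4)/n$. Everything else, including the adjustment for the diagonal and the triangle inequality, is straightforward. The linear-time claim follows from the $\Theta(n^2)$ complexity of Algorithm \ref{Alg:01} once the threshold $t$ is fixed.
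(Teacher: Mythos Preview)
Your proposal is correct and follows essentially the same route as the paper: replace the diagonal by ones, invoke Lemma~\ref{lem:0-1count} to verify Condition~(\ref{prop:URxLL}) for a threshold of order $n^2\gm(A)^{1/2}$, apply Theorem~\ref{thm:algo1}, and absorb the $1/n$ cost of the diagonal modification. The only difference is cosmetic: the paper simply takes $t=n^2\sqrt{4\gm(\widetilde{A})}$, so that $t^2=4n^4\gm(\widetilde{A})>2n^4\gm(\widetilde{A})$ directly yields Condition~(\ref{prop:URxLL}) and produces the constant $2^{9/2}$ exactly, with the case $\gm(\widetilde{A})=0$ handled separately by setting $R=\widetilde{A}$. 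Your limit argument $t\downarrow s=n^2\sqrt{2\gm(A)}$ is valid (and even gives the slightly sharper constant $2^{17/4}$), but the paper's direct choice of $t$ avoids it entirely.
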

\begin{proof}
{From the binary matrix $A\in {\mathcal A}_n$,  we first construct $\widetilde{A}$ by replacing every 0 entry on the diagonal of $A$ with 1. Clearly, $\widetilde{A}\in {\mathcal A}_n$ and $\gm(\widetilde{A})\leq \gm(A)$. Moreover, $\|A-\widetilde{A}\|_1\leq \frac{1}{n}$. If $\gm(\widetilde{A})=0$, we take $R=\widetilde{A}$ and we are done. So assume that $\gm(\widetilde{A})>0$. 
Let   $R\in {\mathcal A}_n$ be the matrix produced as output of Algorithm \ref{Alg:01} on input $\widetilde{A}$  with threshold $t=n^2\sqrt{4\gm(\widetilde{A})}$.
By Lemma \ref{lem:0-1count} and the fact that $\gm(\widetilde{A})>0$, $\widetilde{A}$ and parameter $t=n^2\sqrt{4\gm(\widetilde{A})}$ satisfy Condition (\ref{prop:URxLL}) of Theorem 
\ref{thm:algo1}. Thus,
$$\|R-A\|_1\leq \|A-\widetilde{A}\|_1+\|\widetilde{A}-R\|_1\leq \frac{5}{n}+16(4\gm(\widetilde{A}))^{1/4}\leq \frac{5}{n}+2^{9/2}\gm(A)^{1/4}.$$ }
\end{proof}
\section{Robinson similarity approximations of general matrices}\label{sec:general}
For general matrices, we first decompose the matrix into a convex combination of binary matrices, a standard technique widely used in the literature, and then apply Algorithm \ref{Alg:01} to each summand. Given any matrix $A\in \cA_n$, let ${\rm range}(A)= \{ A_{i,j}:1\leq i\leq j\leq n\}$. Consider the linear ordering $0=s_0<s_1<\dots <s_m$ of  ${\rm range}(A)\cup \{ 0\}$, and define matrices $A^{(k)}$, $1\leq k\leq m$ as follows:

\begin{equation}\label{eq:layer}
A_{i,j}^{(k)}=
\left\{ \begin{array}{ll}1\ & \text{if }A_{i,j}\geq s_k , \\ 0\ & \text{otherwise.}\end{array}\right.
\end{equation}
Clearly, each matrix $A^{(k)}$ is binary, and
\begin{equation}\label{eq:decomp}
A=\sum_{k =1}^m (s_k-s_{k-1})A^{(k )}.
\end{equation}
We refer to the matrices $A^{(k)}$ as the \emph{layers} of $A$. 
From the definition of the layers, it is easy to see that $A^{(k)}\leq A^{(l)}$ whenever $l<k$. Indeed for such $l$ and $k$, if $A^{(k)}_{i,j}=1$ then 
 $A^{(l)}_{i,j}=1$ as well, since $s_l<s_k$. 

{ 
The advantage of writing $A$ as a convex combination of its layers, as opposed to any other decomposition of $A$, lies in the fact that $\Gamma_1$ distributes over this particular decomposition of $A$, even though $\Gamma_1$ is not a linear map in general. }

\begin{proposition}
Suppose $A$ is ``layered'' as in Equation (\ref{eq:decomp}). Then we have
\begin{eqnarray}
\label{eqn:linear}
\gm (A)= \sum_{l=1}^m (s_l-s_{l-1}) \gm (A^{(l)}).
\end{eqnarray}
\end{proposition}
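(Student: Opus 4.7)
The plan is to establish the identity cell-by-cell. That is, I will show that for any two entries $A_{a,b}$ and $A_{c,d}$ of $A$, the positive part distributes over the layer decomposition:
\begin{equation*}
[A_{a,b}-A_{c,d}]_+ \;=\; \sum_{l=1}^m (s_l-s_{l-1})\,[A^{(l)}_{a,b}-A^{(l)}_{c,d}]_+.
\end{equation*}
Once this pointwise identity is proved, the Proposition follows immediately by applying it to the two kinds of differences appearing in the definition of $\gm$, namely $[A_{i,j}-A_{i,k}]_+$ and $[A_{i,j}-A_{k,j}]_+$, and then summing over all triples $1\le i<k<j\le n$ (the normalization $1/n^3$ is the same on both sides).

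The pointwise identity is proved by a monotonicity argument built on the nesting of layers noted just before the Proposition: $A^{(l)}\ge A^{(k)}$ whenever $l<k$, because if $A_{i,j}\ge s_k$ then a fortiori $A_{i,j}\ge s_l$. I split into the two cases according to the sign of $A_{a,b}-A_{c,d}$. If $A_{a,b}\ge A_{c,d}$, then for every threshold $s_l$, the implication $A_{c,d}\ge s_l\Rightarrow A_{a,b}\ge s_l$ gives $A^{(l)}_{a,b}\ge A^{(l)}_{c,d}$ at every layer, so each layer's positive part is simply $A^{(l)}_{a,b}-A^{(l)}_{c,d}$. Hence
\begin{equation*}
\sum_{l=1}^m (s_l-s_{l-1})\,[A^{(l)}_{a,b}-A^{(l)}_{c,d}]_+ \;=\; \sum_{l=1}^m (s_l-s_{l-1})\bigl(A^{(l)}_{a,b}-A^{(l)}_{c,d}\bigr) \;=\; A_{a,b}-A_{c,d}\;=\;[A_{a,b}-A_{c,d}]_+,
\end{equation*}
using the decomposition \eqref{eq:decomp} in the middle equality. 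In the opposite case $A_{a,b}<A_{c,d}$, the same monotonicity argument applied with roles reversed gives $A^{(l)}_{a,b}\le A^{(l)}_{c,d}$ at every layer, so every term on the right vanishes, matching the left side which is $0$.

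With the pointwise identity in hand, I sum both sides over all triples $i<k<j$ (for both kinds of violations), swap the two finite sums on the right, and divide by $n^3$. Each inner sum over triples is exactly $n^3\gm(A^{(l)})$, producing the claimed equality. The only potential obstacle is the monotonicity step, but because the layers are defined by hard thresholds at the values in $\mathrm{range}(A)$, the nesting is automatic and requires no delicate estimate; the rest is bookkeeping. No step poses any real difficulty, and the argument is the natural one: $\gm$ is built from an absolute-value-type functional that is linear on monotone sequences, and the layer decomposition is precisely the one along which the family $\{A^{(l)}_{a,b}\}_l$ is monotone in $l$ for every fixed cell.
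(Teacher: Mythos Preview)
Your proof is correct and follows essentially the same approach as the paper: both establish the pointwise identity $[A_{i,j}-A_{i,k}]_+=\sum_{l=1}^m (s_l-s_{l-1})[A^{(l)}_{i,j}-A^{(l)}_{i,k}]_+$ (and the analogous one for $[A_{i,j}-A_{k,j}]_+$), then sum over triples. The paper carries this out by explicitly naming the indices $n_1,n_2$ with $A_{i,j}=s_{n_1}$, $A_{i,k}=s_{n_2}$ and computing the telescoping sum over $n_2<l\le n_1$, whereas you package the same computation as the monotonicity observation that $A_{a,b}\ge A_{c,d}$ forces $A^{(l)}_{a,b}\ge A^{(l)}_{c,d}$ for every layer; these are two phrasings of the same argument.
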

\begin{proof}
Fix a triple $i,j,k$ satisfying $1\leq i<k<j\leq n$, and let $n_1,n_2,n_3\in\{1,\ldots,m\}$ be so that 
$A_{i,j}=s_{n_1}$, $A_{i,k}=s_{n_2}$, and $A_{k,j}=s_{n_3}$. From the definition of the layers in (\ref{eq:layer}), we have 
\begin{itemize}
\item[(i)] $A_{i,j}^{(l)}=1$ if $l\leq n_1$, and $A_{i,j}^{(l)}=0$ if $l> n_1$.
\item[(ii)] $A_{i,k}^{(l)}=1$ if $l\leq n_2$, and $A_{i,k}^{(l)}=0$ if $l> n_2$.
\item[(iii)] $A_{k,j}^{(l)}=1$ if $l\leq n_3$, and $A_{k,j}^{(l)}=0$ if $l> n_3$.
\end{itemize}
Note first that $[A_{i,j}-A_{i,k}]_+=[s_{n_1}-s_{n_2}]_+=0$  precisely when $n_1\leq n_2$. Using (i), (ii) and (iii) it is easy to see that, if $n_1\leq n_2$, then $[A_{i,j}^{(l)}-A_{i,k}^{(l)}]_+=0$ for every $1\leq l\leq m$. On the other hand, if $[A_{i,j}-A_{i,k}]_+>0$ and thus $n_1> n_2$, then, if $l\leq n_2$ or $l>n_1$ then  $[A_{i,j}^{(l)}-A_{i,k}^{(l)}]_+=0$, and if $n_2<l\leq n_1$ then $[A_{i,j}^{(l)}-A_{i,k}^{(l)}]_+=1$. 
Hence, we can verify the following claim:
\begin{eqnarray*}\label{eq:claim-layer}
[A_{i,j}-A_{i,k}]_+=\sum_{l=1}^m (s_l-s_{l-1})[A_{i,j}^{(l)}-A_{i,k}^{(l)}]_+.
\end{eqnarray*}
Indeed, if $n_1\leq n_2$ then both sides of the above equation are equal to 0. For the case where $n_1>n_2$, we have
\begin{eqnarray*}
\sum_{l=1}^m(s_l-s_{l-1})[A_{i,j}^{(l)}-A_{i,k}^{(l)}]_+
=\sum_{l=n_2+1}^{n_1}s_l-s_{l-1}=s_{n_1}-s_{n_2}=[A_{i,j}-A_{i,k}]_+.
\end{eqnarray*}
Repeating the above argument, we obtain a similar claim for $[A_{i,j}^{(l)}-A_{k,j}^{(l)}]_+$, and consequently we get 
\begin{eqnarray*}
[A_{i,j}-A_{i,k}]_++[A_{i,j}-A_{k,j}]_+=\sum_{l=1}^m (s_l-s_{l-1})\Big([A_{i,j}^{(l)}-A_{i,k}^{(l)}]_++[A_{i,j}^{(l)}-A_{k,j}^{(l)}]_+\Big).
\end{eqnarray*}
So we have  
\begin{eqnarray*}
\sum_{l=1}^m (s_l-s_{l-1}) \gm (A^{(l)})&=& \sum_{l=1}^m (s_l-s_{l-1})\Big(\frac{1}{n^3}\sum_{1\leq i<k<j\leq n} [A^{(l)}_{i,j}-A^{(l)}_{i,k}]_+\,+[A^{(l)}_{i,j}-A^{(l)}_{k,j}]_+\Big)\\
&=& \frac{1}{n^3}\sum_{1\leq i<k<j\leq n} \sum_{l=1}^m (s_l-s_{l-1})\Big([A^{(l)}_{i,j}-A^{(l)}_{i,k}]_+\,+[A^{(l)}_{i,j}-A^{(l)}_{k,j}]_+\Big)\\
&=& \frac{1}{n^3}\sum_{1\leq i<k<j\leq n} [A_{i,j}-A_{i,k}]_+\,+[A_{i,j}-A_{k,j}]_+=\gm(A),
\end{eqnarray*}
which finishes the proof. 
\end{proof}

\begin{algorithm}
\caption{Robinson similarity approximation of a general matrix\label{Alg:general}}
\begin{algorithmic}
\STATE{{\bf input:} Matrix $A\in \An$, \blue{positive} thresholds $t_1,\dots ,t_m$}
\STATE{{\bf output:} Robinson similarity matrix $R\in \An$}
\STATE{Compute ${\rm range}(A)\cup \{ 0,1\}$, as an ordered list $s$\;}
\FOR{$i\gets 1$ {\bf to} $n$}{
\FOR{$k\gets 1$ {\bf to}  $m$}{
\blue{\STATE{$R^{(k)}_{1,i}\gets 0;$ $R^{(k)}_{i,1}\gets 0$\;}
\STATE{$R^{(k)}_{i,n}\gets 0;$ $R^{(k)}_{n,i}\gets 0$\;}
\STATE{$A_{i,i}\gets 1$ \;}}
}\ENDFOR
}\ENDFOR
\FOR{$i \gets \blue{2}$ {\bf to} $n$}{ 
\STATE{$j\gets \blue{n-1}$\;}
\WHILE{$j\geq i$}{
\FOR{$k\gets 1$ {\bf to}  $m$}{
\STATE{{\bf if} $A_{i,j}\geq s[k]$ {\bf then} $temp\gets 1$;}
\STATE{{\bf else} $temp\gets 0$}
\STATE{ $1^k_{\UR}(i,j)\gets 1^k_{\UR}(i-1,j)+1^k_{\UR}(i,j+1)-1^k_{\UR}(i-1,j+1)+temp$\;}
\STATE{{\bf if} $1^k_{\UR}(i,j)<t_k$ {\bf then}  $R^{(k)}_{i,j}\gets 0;$ $R^{(k)}_{j,i}\gets 0$;}
\STATE{{\bf else}   $R^{(k)}_{i,j}\gets 1;$ $R^{(k)}_{j,i}\gets 1$}}
\ENDFOR
\STATE{$j\gets j-1$;}}
\ENDWHILE}
\ENDFOR
\STATE{$R\gets {\mathbf{0}}$\;}
\STATE{{\bf for} $k\gets 1$ {\bf to} $m$ {\bf do} $R\gets R+(s[k]-s[k-1])R^{(k)}$}
\RETURN $R$
\end{algorithmic}
\end{algorithm}

\medskip
Algorithm \ref{Alg:general} is essentially a simultaneous execution of Algorithm \ref{Alg:01} for each binary matrix $A^{(k)}$. 
The quantities $1_{\UR}^k(i,j)$ thus refer to the number of ones in the region $\UR (i,j)$ in $A^{(k)}$. The algorithm has complexity $O(n^2m)$, where $m$ is the size of ${\rm range}(A)$, and thus $m\leq n^2$. Since every matrix $R^{(k)}$ is Robinson similarity, so is their linear combination $R$. In the following theorem, we will show that there exist thresholds $t_1,\dots ,t_m$ so that the difference between $A$ and $R$ is bounded by $C \gm (A)^{1/4}$ for some constant $C$. To avoid anomalous behavior, we assume that our input matrix is not Robinson similarity, {\it i.e.}~$\gm (A)>0$.

\begin{theorem}
\label{thm:algo2}
Let $A\in \cA_n$, and  $\gm (A)>0$.
If $R$ is the matrix produced as output of Algorithm \ref{Alg:general} on input $A$  with thresholds $t_k=\sqrt{4\gm \pink{(A^{(k)})}}n^2$ for $k=1,\dots ,m$, then 
\[\|A-R\|_1\leq \blue{2^{9/2}}{\gm(A)}^{1/4}(1+O(n^{-1/4})).\]
\end{theorem}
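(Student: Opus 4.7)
The plan is to reduce the general statement to the binary case already handled by Theorem \ref{thm:algo1}, exploiting two ingredients from the preceding material: the layer decomposition (\ref{eq:decomp}) of $A$, and the linearity identity (\ref{eqn:linear}) that distributes $\gm$ across layers. Observe from Algorithm \ref{Alg:general} that the returned matrix is $R=\sum_{k=1}^m(s_k-s_{k-1})R^{(k)}$, where each $R^{(k)}$ is precisely what Algorithm \ref{Alg:01} would output on input $A^{(k)}$ (the $k$-th layer of the diagonal-adjusted matrix) with threshold $t_k$. Combined with the corresponding layered expression for $A$ and the $\ell^1$-triangle inequality, this yields
$$\|A-R\|_1\ \le\ \sum_{k=1}^m(s_k-s_{k-1})\,\|A^{(k)}-R^{(k)}\|_1\ +\ O(1/n),$$
where the $O(1/n)$ absorbs the algorithm's diagonal adjustment $A_{i,i}\gets 1$: this changes at most $n$ entries of $A$ and, since $\gm$ ignores the diagonal, it leaves $\gm(A)$ unchanged.

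Next I would apply Theorem \ref{thm:algo1} to each layer. To do so, I must verify condition (\ref{prop:URxLL}) for the pair $(A^{(k)},t_k)$. Since after the diagonal adjustment every $A^{(k)}$ is binary with $1$'s on the diagonal, Lemma \ref{lem:0-1count} yields $1^k_\UR(i,j)\cdot 0^k_\LL(i,j)\le 2n^4\gm(A^{(k)})=\tfrac{1}{2}t_k^2$, so at most one of the two factors can be $\ge t_k$, which is exactly (\ref{prop:URxLL}). Theorem \ref{thm:algo1} then delivers
$$\|A^{(k)}-R^{(k)}\|_1\ \le\ \frac{16\sqrt{t_k}+4}{n}\ =\ 2^{9/2}\,\gm(A^{(k)})^{1/4} + \frac{4}{n},$$
via the identity $16\sqrt{t_k}/n = 16\sqrt{2}\,\gm(A^{(k)})^{1/4} = 2^{9/2}\gm(A^{(k)})^{1/4}$.

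The decisive step — and what I expect to be the main (if mild) point of care — is to collapse the weighted sum of the $\gm(A^{(k)})^{1/4}$'s back into a single $\gm(A)^{1/4}$ term. Substituting the per-layer bound and using $\sum_k(s_k-s_{k-1})=s_m\le 1$,
$$\|A-R\|_1\ \le\ 2^{9/2}\sum_{k=1}^m(s_k-s_{k-1})\,\gm(A^{(k)})^{1/4}\ +\ O(1/n).$$
Concavity of $x\mapsto x^{1/4}$, applied via weighted Jensen's inequality with weights $(s_k-s_{k-1})/s_m$ and combined with the linearity identity (\ref{eqn:linear}), gives
$$\sum_{k=1}^m(s_k-s_{k-1})\,\gm(A^{(k)})^{1/4}\ \le\ s_m^{3/4}\bigl(\textstyle\sum_k(s_k-s_{k-1})\gm(A^{(k)})\bigr)^{1/4} = s_m^{3/4}\,\gm(A)^{1/4}\ \le\ \gm(A)^{1/4}.$$
This already yields $\|A-R\|_1\le 2^{9/2}\gm(A)^{1/4}+O(1/n)$. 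To re-express the additive $O(1/n)$ as the multiplicative error $2^{9/2}\gm(A)^{1/4}\cdot O(n^{-1/4})$ of the statement, I would use that $\gm(A)>0$ forces at least one binary layer to satisfy $\gm(A^{(k)})\ge n^{-3}$ (since any non-Robinson binary matrix has $\gm\ge n^{-3}$); hence $\gm(A)^{1/4}$ is bounded below by a quantity of order $n^{-3/4}$ (up to the smallest layer gap $s_k-s_{k-1}$, treated as a fixed constant of the matrix), whence $1/n = O(n^{-1/4})\cdot \gm(A)^{1/4}$, completing the proof.
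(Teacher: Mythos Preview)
Your approach is essentially identical to the paper's: decompose $A$ into its layers via (\ref{eq:decomp}), apply the binary bound to each layer (the paper packages this as Corollary~\ref{cor:algo1withbound}; you invoke Lemma~\ref{lem:0-1count} and Theorem~\ref{thm:algo1} directly, which is the same content), sum the per-layer estimates with the triangle inequality, and collapse $\sum_k(s_k-s_{k-1})\gm(A^{(k)})^{1/4}$ back to $\gm(A)^{1/4}$ using concavity of $x\mapsto x^{1/4}$ together with the linearity identity~(\ref{eqn:linear}).

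The one genuine discrepancy is the final step, converting the additive $O(1/n)$ into the multiplicative $O(n^{-1/4})$ factor. The paper simply asserts that any non-Robinson $A\in\cA_n$ satisfies $\gm(A)\ge n^{-3}$, from which $1/n\le n^{-1/4}\gm(A)^{1/4}$ follows at once. Your detour through a single non-Robinson binary layer picks up the gap $s_k-s_{k-1}$ as a multiplicative factor, which you then ``treat as a fixed constant of the matrix.'' That makes the implied constant in $O(n^{-1/4})$ depend on $A$, which is not the intended uniform reading of the theorem and is therefore a gap in your argument. (In fairness, the paper's assertion $\gm(A)\ge n^{-3}$ is only established in Section~\ref{sec:def} for \emph{binary} matrices and does not hold for general $A\in\cA_n$---a near-Robinson matrix with a single violation of size $\delta$ has $\gm(A)=\delta/n^3$---so the paper's own argument at this point is also not airtight; but that is what the paper does, and your layer-gap version is strictly weaker than it.)
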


\begin{proof}
First suppose $A$ is a binary matrix. \pink{Then by Corollary \ref{cor:algo1withbound} applied to $A$ with threshold $\sqrt{4\gm \pink{(A)}}n^2$,  we get \blue{$\|A-R\|_1\leq {\frac{5}{n}+2^{9/2}\gm(A)^{1/4}}.$ }}
%

Next, assume that $A\in \cA_n$ is a general matrix, and let $A=\sum_{k =1}^m (s_k-s_{k-1})A^{(k )}$ be the decomposition of $A$ into layers of binary matrices as described in Equation (\ref{eq:decomp}). Then,  we have $\gm(A)=\sum_{k=1}^m (s_k-s_{k-1})\gm(A^{(k)}).$ To avoid clutter of notation, let $\epsilon=\gm(A)$ and  $\epsilon_k:=\gm(A^{(k)})$. For every $1\leq k\leq m$, we apply \blue{Corollary \ref{cor:algo1withbound} to $A^{(k)}$ with threshold  
$t_k=(4\epsilon_k)^{1/2}n^2$} to obtain a Robinson similarity matrix $R^{(k)}$ such that
$$\|A^{(k)}-R^{(k)}\|_1\leq \blue{\frac{5}{n}+2^{9/2}\epsilon_k^{1/4}}.$$ 
So, Algorithm \ref{Alg:general} computes $R=\sum_{k=1}^m (s_k-s_{k-1}) R^{(k)}$, which is Robinson similarity as well. Moreover, 
\begin{eqnarray*}
\|A-R\|_1&\leq &\sum_{k=1}^m(s_k-s_{k-1})\|A^{(k)}-R^{(k)}\|_1\\
&\leq & \blue{2^{9/2}} \sum_{k=1}^m(s_k-s_{k-1})\epsilon_k^{1/4}+\sum_{k=1}^m(s_k-s_{k-1})\frac{\blue{5}}{n}\\
&\leq & \blue{2^{9/2}} (\gm(A))^{1/4}+\frac{\blue{5}}{n},\\
\end{eqnarray*}
where in the last inequality we used the fact that the function $f(x)=x^{1/4}$ is concave. 

By definition, if $A\in \cA_n$ is not Robinson similarity, then $\gm(A)\geq \frac{1}{n^3}$, and thus $n\gm(G)^{1/4}\geq n^{1/4}$. This completes the proof. 
\end{proof}

\section{Improvement through preprocessing}\label{sec:improve}
Finally, we give an algorithm that represents a preprocessing step for Algorithm \ref{Alg:01}. By Theorem \ref{Alg:01}, the Robinson similarity approximation produced by Algorithm  \ref{Alg:01} is at bounded distance from the input matrix $A$, provided  that Condition (\ref{prop:URxLL}) holds. By Lemma \ref{lem:0-1count}, the condition holds for every matrix, if we choose $t>\sqrt{2}n^2(\gm (A))^{1/2}$ (see proof of Corollary \ref{cor:algo1withbound}.) The preprocessing step is designed to transform the input matrix $A$, such that the condition holds for a smaller value of $t$. The modified matrix is then used as input to Algorithm  \ref{Alg:01} with the new value of $t$, which leads to an output matrix $R$ that is closer to the input matrix. 


The trade-off here is that the preprocessing step increases the complexity of the algorithm. This increase is tolerable, as the complexity still remains polynomial. We give an implementation which is quadratic for binary matrices, but we believe that a more sophisticated implementation would lead to an improvement in the complexity. In the following, the algorithm is given in detail for binary matrices only. It can easily be adapted to general matrices, in much the same way that Algorithm \ref{Alg:general} is adapted from Algorithm \ref{Alg:01}. 

First, we introduce some terminology. Suppose that  matrix $A$ is given, and a threshold value $t>0$ is fixed. \blue{ Let $\Delta$ denote the collection of all cells in $A$ which are on or above the diagonal.} We call a cell $(i,j)\in \Delta$ \emph{inverted} if $1_{\UR}(i,j)\geq t$ and $0_{\LL}(i,j)\geq t$. Thus, there exist inverted cells if and only if Condition (\ref{prop:URxLL}) is violated. To \emph{toggle} a cell $(i,j)$ is to set the value of all cells in $\UR (i,j)$ equal to zero, and the values of all cells in $\LL (i,j)$ equal to 1. It is easy, yet important, to observe  that toggling a cell can only decrease $\gm (A)$. 
{
\begin{lemma}\label{lem:toggle-decreases}
Let $A\in \An$ and $t>0$ be given. Suppose that $(i,j)\in \Delta$ is an inverted cell, and let $\widetilde{A}$ denote the matrix obtained from $A$ by toggling $(i,j)$. Then \blue{for any fixed triple $1\leq s<k<t\leq n$, we have
\begin{equation}\label{eq:toggle-decrease}
[\widetilde{A}_{s,t}-\widetilde{A}_{s,k}]_+\leq [A_{s,t}-A_{s,k}]_+ \ \mbox{ and } \ [\widetilde{A}_{s,t}-\widetilde{A}_{k,t}]_+\leq [A_{s,t}-A_{k,t}]_+.
\end{equation}
Consequently, we get $\gm(\widetilde{A})\leq \gm(A)$. }
\end{lemma}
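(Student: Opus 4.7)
The plan is to establish the two pointwise inequalities in \eqref{eq:toggle-decrease} for every triple $1\le s<k<t\le n$, after which summing over triples and dividing by $n^3$ immediately gives $\gm(\widetilde A)\le\gm(A)$. The two inequalities are structurally symmetric in the roles of rows and columns, so I will focus on the first, $[\widetilde A_{s,t}-\widetilde A_{s,k}]_+\le[A_{s,t}-A_{s,k}]_+$, and then indicate how the second is handled analogously.

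Fix the toggled cell $(i,j)\in\Delta$. The matrices $A$ and $\widetilde A$ agree off the disjoint union $\UR(i,j)\cup\LL(i,j)$; on $\UR(i,j)$ all entries are reset to $0$ and on $\LL(i,j)$ all entries are reset to $1$. Consequently, the only way toggling could raise $\widetilde A_{s,t}-\widetilde A_{s,k}$ above its pre-toggle value is if $(s,t)\in\LL(i,j)$ or $(s,k)\in\UR(i,j)$. If neither of these occurs, then $\widetilde A_{s,t}\le A_{s,t}$ and $\widetilde A_{s,k}\ge A_{s,k}$, so $\widetilde A_{s,t}-\widetilde A_{s,k}\le A_{s,t}-A_{s,k}$ and the inequality follows.

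The key observation for the two remaining cases is that the triple condition $s<k<t$ automatically drags the companion cell into the same region. If $(s,t)\in\LL(i,j)$, so $i\le s\le t\le j$, then combining with $s<k<t$ gives $i\le s\le k\le j$, hence $(s,k)\in\LL(i,j)$ as well; consequently $\widetilde A_{s,t}=\widetilde A_{s,k}=1$ and the LHS vanishes. Symmetrically, if $(s,k)\in\UR(i,j)$, so $s<i<j<k$, then $k<t$ yields $s<i<j<t$, hence $(s,t)\in\UR(i,j)$; now $\widetilde A_{s,t}=\widetilde A_{s,k}=0$ and again the LHS vanishes. The second inequality of \eqref{eq:toggle-decrease} is handled by the same trichotomy applied to the pair $(s,t),(k,t)$, invoking $s<k$ in place of $k<t$ to force the companion cell into the same region. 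I anticipate no genuine obstacle here: the proof is disjoint-case bookkeeping on $\UR(i,j)$ and $\LL(i,j)$, and the only care required is to respect the strict-versus-weak inequalities defining these two regions so that the companion-forcing step lands each cell in precisely the claimed set.
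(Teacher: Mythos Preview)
Your proposal is correct and follows essentially the same approach as the paper's proof: both hinge on the observation that if $(s,t)\in\LL(i,j)$ then the companion cell $(s,k)$ is forced into $\LL(i,j)$ as well (and symmetrically for $\UR(i,j)$), so the positive part on the left vanishes in the potentially problematic cases. Your trichotomy (neither bad case / $(s,t)\in\LL$ / $(s,k)\in\UR$) is a slightly cleaner packaging than the paper's four-way split, since your first case absorbs via monotonicity what the paper treats separately as ``$(s,t)\in\UR$ or $(s,k)\in\LL$,'' but the content is identical.
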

\begin{proof}
\blue{
We only prove the first inequality; the second one can be proved similarly. Fix a triple $1\leq s<k<t\leq n$. 
First note that, the first inequality in (\ref{eq:toggle-decrease}) holds trivially whenever} $(s,t), (s,k)\in \Delta\setminus(\LL(i,j)\cup\UR(i,j))$, as on these cells the matrices $A$ and $\widetilde{A}$ are identical. 
For the cases where $(s,t)\in \UR(i,j)$ or $(s,k)\in \LL(i,j)$, we have $\widetilde{A}(s,t)=0$ or $\widetilde{A}(s,k)=1$. Thus, in both cases, we get  $[\widetilde{A}_{s,t}-\widetilde{A}_{s,k}]_+=0$, and in particular  $[\widetilde{A}_{s,t}-\widetilde{A}_{s,k}]_+\leq [A_{s,t}-A_{s,k}]_+$. Moreover, it is clear from the definition of the region $\LL(i,j)$ that if $(s,t)\in \LL(i,j)$ then $(s,k)\in \LL(i,j)$. Similarly, if $(s,k)\in \UR(i,j)$ then $(s,t)\in \UR(i,j)$ as well. Putting all these together, we conclude that the desired inequality holds in all cases. 
 Therefore, 
\begin{eqnarray*}
\gm (\widetilde{A}) &=& \frac{1}{n^3}\sum_{1\leq s<k<t\leq n} [\widetilde{A}_{s,t}-\widetilde{A}_{s,k}]_+\,+[\widetilde{A}_{s,t}-\widetilde{A}_{k,t}]_+\\
&\leq&\frac{1}{n^3}\sum_{1\leq s<k<t\leq n} [{A}_{s,t}-{A}_{s,k}]_+\,+[{A}_{s,t}-{A}_{k,t}]_+\\
&=&\gm(A),
\end{eqnarray*}
which finishes the proof. 
\end{proof}
}

Algorithm \ref{Alg:pre} can then be described as follows: each inverted cell is toggled, and after each toggling step, the values of $1_{\UR} $ and $0_{\LL}$ are recalculated for each cell. 

\begin{algorithm}
\caption{Preprocessing step\label{Alg:pre}}
\begin{algorithmic}
\STATE{{\bf input:} Matrix $A\in \An$, threshold $t$}
\STATE{{\bf output:} Updated matrix $A$}
\FOR{$i \gets 1$ {\bf to} $n$}{
\FOR{$j \gets i$ {\bf to} $n$}{
\STATE{Compute $1_{\UR}(i,j)$ and $0_{\LL}(i,j)$\;}
\STATE{{\bf if} $1_{\UR}(i,j)\geq t$ {\bf and} $0_{\LL}(i,j)\geq t$ {\bf then}}
\STATE{{\bf for} all cells $(r,s)\in \UR (i,j)$ {\bf do } $A_{r,s}\gets 0;\,A_{s,r}\gets 0$}
\STATE{{\bf for} all cells $(r,s)\in \LL (i,j)$ {\bf do } $A_{r,s}\gets 1;\,A_{s,r}\gets 1$}
}
\ENDFOR
}
\ENDFOR
\end{algorithmic}
\end{algorithm}

\medskip
Note that Algorithm \ref{Alg:pre} involves computing  $1_{\UR}(a,b)$ for all $1\leq a\leq i$ and $j\leq b\leq n$, and computing $0_{\LL}(a,b)$ for all $i\leq a \leq j$ and $i\leq b\leq j$. These values must be recomputed in each iteration, since $A$ is being changed. Any cell $(i,j)$ is tested exactly once, to see whether it is inverted, and if it is, it is toggled.  This naive implementation of the algorithm has complexity $O(n^4)$ and is thus quadratic in the size of the input. When adapted to general matrices, the complexity becomes $O(mn^4)$, where $m$ is the number of different values taken by entries of $A$. Clearly $m\leq n^2$. Also, if all entries of $A$ are rounded to the nearest multiple of $\epsilon^A{1/3}$ (recall that $\epsilon =\Gamma_1 (A)$), then $m=\epsilon^{-1/3}$, \blue{while the error between $A$ and $R$  is still of the same order.}

One may be concerned  that toggling some cells may create new inverted cells, in which case, just considering each cell once would not be sufficient, and the complexity could increase. The following lemma shows this cannot be the case. 

\begin{lemma}\label{lem:preproc}
Suppose Algorithm \ref{Alg:pre} is applied to a binary matrix $A$, with threshold $t$. Then  the output of the algorithm, which we call the modified matrix, satisfies the condition that, for each cell $(i,j)$, $1_{\UR}(i,j)<t$ or $0_{\LL}(i,j)<t$. 
\end{lemma}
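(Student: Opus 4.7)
The plan is to fix a cell $(a,b)$ with $a \leq b$ and show that the condition ``$1_{\UR}(a,b) < t$ or $0_\LL(a,b) < t$'' holds throughout the rest of the algorithm, starting from the moment Algorithm \ref{Alg:pre} finishes processing $(a,b)$. Since every cell is processed exactly once, and the condition clearly holds immediately after processing (by the algorithm's if-test: if the cell is inverted it gets toggled, which zeros out $\UR(a,b)$ giving $1_{\UR}(a,b)=0$; otherwise one of the two quantities was already $<t$), this suffices to prove the lemma.

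I would then establish two monotonicity-type observations governing how later toggles can affect $1_{\UR}(a,b)$ and $0_\LL(a,b)$. Observation~1 (monotonicity of $1_{\UR}$): any toggle at a cell $(i',j')$ processed after $(a,b)$ in the lexicographic order---so $i'>a$, or $i'=a$ and $j'>b$---can only weakly decrease $1_{\UR}(a,b)$. Indeed, $\UR(i',j')\cap \UR(a,b)$ contributes by zeroing $1$s already in $\UR(a,b)$, while $\LL(i',j')\cap \UR(a,b)$ is empty whenever $i'\geq a$ (since $s<a$ is incompatible with $s\geq i'\geq a$).

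Observation~2 (no-growth of $0_\LL$ in the relevant regime): if $0_\LL(a,b)<t$ at the end of processing $(a,b)$, then no subsequent toggle can increase $0_\LL(a,b)$. Only toggles at cells $(i',j')$ with $a<i'\leq j'<b$ can add zeros to $\LL(a,b)$, since $\UR(i',j')\cap \LL(a,b)\neq\emptyset$ forces $i'>a$ and $j'<b$. For such an $(i',j')$, the containment $\LL(i',j')\subseteq \LL(a,b)$ implies $0_\LL(i',j')\leq 0_\LL(a,b)<t$ at the time $(a,b)$ is processed. Furthermore, throughout the interval between the processing of $(a,b)$ and of $(i',j')$, no toggle can increase $0_\LL(i',j')$: the only way to do so would be a toggle $(i'',j'')$ with $i''>i'$, but any cell processed before $(i',j')$ in lex order satisfies $i''\leq i'$. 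Hence $0_\LL(i',j')<t$ when $(i',j')$ is processed, so $(i',j')$ is not inverted at that moment and is never toggled.

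Combining the observations finishes the proof: immediately after $(a,b)$ is processed, at least one of $1_{\UR}(a,b)<t$ or $0_\LL(a,b)<t$ holds; the former persists by Observation~1 and the latter by Observation~2. The main obstacle is Observation~2, where one must carefully marry the lex-order of the iteration with the set-containment $\LL(i',j')\subseteq \LL(a,b)$ to rule out a feedback loop in which a chain of toggles silently inflates $0_\LL(a,b)$. This is the one place where the specific iteration order of Algorithm \ref{Alg:pre} is essential, and will require the most careful bookkeeping to write cleanly.
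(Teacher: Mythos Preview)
Your argument is correct, but it takes a considerably more elaborate route than the paper's. The paper proves a stronger, purely local fact that makes the iteration order irrelevant: \emph{a single toggle at $(i,j)$ cannot turn any non-inverted cell $(k,\ell)$ into an inverted one}. The reason is that if the toggle increases $1_{\UR}(k,\ell)$, then $\UR(k,\ell)$ must meet $\LL(i,j)$, which forces $(k,\ell)\in\LL(i,j)$; hence $\LL(k,\ell)\subseteq\LL(i,j)$ is set entirely to $1$ by the same toggle, giving $0_{\LL}(k,\ell)=0<t$. The case where $0_{\LL}(k,\ell)$ increases is symmetric (one gets $(k,\ell)\in\UR(i,j)$, so $\UR(k,\ell)\subseteq\UR(i,j)$ is zeroed out). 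Since the toggled cell itself satisfies $1_{\UR}(i,j)=0$ afterwards, non-invertedness is an invariant preserved by every toggle, and the lemma follows for \emph{any} processing order.

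Your Observation~2 works, but the careful interplay you set up between the lexicographic order and the containment $\LL(i',j')\subseteq\LL(a,b)$ is not actually needed; in particular, your closing remark that ``the specific iteration order of Algorithm~\ref{Alg:pre} is essential'' is not correct. What your approach does buy is an explicit invariant phrased from the viewpoint of a fixed cell $(a,b)$, which some readers may find more concrete, at the cost of the extra bookkeeping.
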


\begin{proof}
It suffices to show that, if an inverted cell $(i,j)$ is toggled, no cell that was not inverted before the toggle can become inverted afterwards. Suppose to the contrary that there exists a cell $(k,\ell)$ which becomes inverted after the toggle. 
That is, either $1_{\UR}(k,\ell)$ is increased by the toggle, or $0_{\LL} (k,\ell)$ is {increased}. Assume without loss of generality  that $1_{\UR}(k,\ell)$ increases after the toggle. 
The toggle sets all entries in $\UR (i,j)$ to zero, and all entries to $\LL (i,j)$ to one. So for $1_{\UR}(k,\ell)$ to increase, $\UR (k,\ell)$ and $\LL (i,j)$ must intersect. Thus, $(k,\ell)\in \LL (i,j)$. Therefore, after the toggle, all entries in $\LL (k,\ell)$ have been set to one, so {$0_{\LL}(k,\ell)=0<t$}. Therefore, $(k,\ell)$ is not an inverted cell after the toggle, which contradicts our assumption.
\end{proof}

The above lemma shows that the modified matrix $\widehat{A}$ returned by Algorithm \ref{Alg:pre} satisfies Condition \ref{prop:URxLL} of Theorem \ref{thm:algo1} for the chosen value of $t$. Thus, this matrix can be used as input for Algorithm \ref{Alg:01} to obtain a Robinson similarity approximation $R$, and Theorem \ref{thm:algo1} can be applied to bound $\| \widehat{A}-R\|_1$. However, to obtain a good approximation we need to make sure we can  also  bound  $\| A-\widehat{A}\|_1$, the distance between the input and output matrices of Algorithm \ref{Alg:pre}. The following lemma gives such a bound. 

\begin{lemma}\label{lemma:prep}
Let $A\in \cA_n$ be a binary matrix. Let $\widehat{A}$ denote the output of Algorithm \ref{Alg:pre} with threshold $t$. Then $\|A-\widehat{A}\|_1\leq \frac{4n^2}{t}(\gm(A)-\gm(\widehat{A}))$.
\end{lemma}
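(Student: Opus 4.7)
The plan is to prove the lemma by a per-toggle analysis combined with a telescoping argument. First I would view Algorithm~\ref{Alg:pre}'s execution as a sequence of intermediate matrices $A = A_0, A_1, \ldots, A_N = \widehat{A}$, where each $A_{k+1}$ is obtained from $A_k$ by toggling a single inverted cell. By the triangle inequality for $\|\cdot\|_1$ and the monotonicity of $\gm$ under toggling (Lemma~\ref{lem:toggle-decreases}), both $\|A-\widehat{A}\|_1$ and $\gm(A)-\gm(\widehat{A})$ decompose as telescoping sums of per-toggle contributions, so it suffices to establish the inequality of the lemma for each single toggle.

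Fix a single toggle at inverted cell $(i,j)$ in matrix $B$ producing $B'$, with $u := 1_\UR^B(i,j) \geq t$ and $v := 0_\LL^B(i,j) \geq t$. The cell-count side is straightforward: the toggle alters precisely the $u$ cells of $X := \{(s,t) \in \UR(i,j) : B_{s,t}=1\}$ (set to $0$), the $v$ cells of $Y := \{(s',t') \in \LL(i,j) : B_{s',t'}=0\}$ (set to $1$), and their reflections below the diagonal. Hence $\|B-B'\|_1 \leq 2(u+v)/n^2$.

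The core of the proof is a lower bound on the $\gm$ decrease of the form $\gm(B) - \gm(B') \geq uv/(Cn^4)$ for a small constant $C$, and this is where the main work lies. My approach is to adapt the pair-counting in the proof of Lemma~\ref{lem:0-1count}. For each pair $((s,t),(s',t')) \in X\times Y$ with $s'<t'$, the indices satisfy $s < i \leq s' < t' \leq j < t$ and $B_{s,t}-B_{s',t'}=1$, so the ``four-corner'' telescoping inequalities
\begin{equation*}
1 \leq [B_{s,t}-B_{s,t'}]_+ + [B_{s,t'}-B_{s',t'}]_+, \qquad 1 \leq [B_{s,t}-B_{s',t}]_+ + [B_{s',t}-B_{s',t'}]_+
\end{equation*}
hold. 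Each of the four right-hand terms is a summand in the $\gm$-contribution of a distinct triple among $(s,t',t), (s,s',t'), (s,s',t), (s',t',t)$, and because exactly one of the two cells appearing in each such summand is altered by the toggle, the pre-toggle value of that summand equals its own decrease under the toggle. Summing over all valid pairs, and noting that any single triple is counted with multiplicity at most $n$ (the ``free'' coordinate that distinguishes pairs mapped to a common triple ranges within one of the intervals $[1,i-1]$, $[i,j]$, $[j+1,n]$), we obtain $uv \leq Cn^4(\gm(B)-\gm(B'))$ for an absolute constant $C$. Finally, the elementary bound $uv \geq t\max(u,v) \geq t(u+v)/2$ (from $u,v \geq t$), combined with the cell-count bound $\|B-B'\|_1 \leq 2(u+v)/n^2$, yields the per-toggle version of the lemma, and summing over toggles yields the result.

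The main obstacle I anticipate is the multiplicity bookkeeping in the $\gm$-decrease argument, which must be done carefully to recover the constant $4n^2/t$ in the statement. A secondary technical point is the treatment of pairs with $s'=t'$ (diagonal $0$-entries in $\LL(i,j)$), which do not appear in any off-diagonal triple and hence must be excluded from the pair count; this is harmless if, as in the proof of Corollary~\ref{cor:algo1withbound}, we assume the diagonal entries of the input to Algorithm~\ref{Alg:pre} have been preset to $1$, in which case $Y$ contains no diagonal cells and the pair analysis proceeds without modification.
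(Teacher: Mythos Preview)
Your proposal is correct and follows essentially the same approach as the paper: telescope over the sequence of toggles, bound the per-toggle $\ell^1$-change by $2(u+v)/n^2$, establish the key estimate $\gm(B)-\gm(B')\geq uv/n^4$ via the four-corner decomposition (as in Lemma~\ref{lem:0-1count}) combined with Lemma~\ref{lem:toggle-decreases}, and finish using $uv\geq t(u+v)/2$. The paper organizes the multiplicity count for $C=1$ by splitting the four term-types into two groups (those anchored at a $\UR$-cell and those anchored at an $\LL$-cell), each bounded separately by $n^3(\gm(B)-\gm(B'))$; your flagged concern about diagonal cells in $\LL(i,j)$ is not explicitly addressed in the paper's argument either.
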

\begin{proof}
The output matrix  $\widehat{A}$ is obtained from $A$ by consecutive toggling of inverted cells, occurring in lines 5 and 6 of Algorithm \ref{Alg:pre}. The condition in line 4 checks whether a cell is inverted. Let $A=A^0, A^1,A^2,\ldots, A^m=\widehat{A}$ denote the matrices in the intermediate steps. We will bound the distance between two consecutive matrices.

Fix $s$, $0\leq s<m$, and assume that $A^s$ is modified because cell $(i,j)$ is found to be inverted, and is then toggled.
So $1_{\UR}(i,j)\geq t$ and $0_{\LL}(i,j)\geq t$, where $1_{\UR}$ and $0_{\LL}$ are computed from $A^s$. So $A^{s+1}$ is formed from $A^s$ by adjusting every cell in $\UR(i,j)$ and its counterpart below the diagonal to be 0, and every cell in $\LL(i,j)$ and its counterpart below the diagonal to be 1. 

We can intuitively observe that $\gm (A^s)$ drops by at least $ \frac{ 0_{LL}(i,j)1_{UR}(i,j)}{n^4}$ when $(i,j)$ is toggled. \blue{Note that, normalizing as in the definition of $\gm$ and applying the same reasoning as in Lemma \ref{lem:0-1count}, one may be tricked into thinking that $\gm (A^s)$ drops at least by $\frac{2}{n^3}0_{LL}(i,j)1_{UR}(i,j)$ when $(i,j)$ is toggled, since a contribution of $\frac{2}{n^3}$ is removed for each pair of ``bad'' cells from $\UR (i,j)$ and $\LL (i,j)$. However, this reasoning does not take overcounting into consideration. We dedicate the following claim to a rigorous proof of this intuitive observation. }

{
\begin{claim}
For $A$, $s$, and $(i,j)$ as above, we have
$$\gm(A^s)-\gm(A^{s+1})\geq \frac{0_{\LL}(i,j)1_{\UR}(i,j)}{n^4}.$$
\end{claim}
\begin{proof}[Proof of claim]
First, consider a cell $(k,\ell)$ from $\UR(i,j)$ containing 1,  and a cell $(k',\ell')$  from $\LL(i,j)$, containing 0. 
Similar to the proof of Lemma \ref{lem:0-1count}, 
we have that 
\begin{eqnarray}
2=2[A^s_{k,\ell}-A^s_{k',\ell'}]_+ &=& [A^s_{k,\ell}-A^s_{k',\ell}]_+ +[A^s_{k',\ell}-A^s_{k',\ell'}]_+\nonumber\\
&&+[A^s_{k,\ell}-A^s_{k,\ell'}]_+ +[A^s_{k,\ell'}-A^s_{k',\ell'}]_+. \label{equality-1}
\end{eqnarray}
\blue{By Lemma \ref{lem:toggle-decreases},} for every triple  $1\leq k<l'<l\leq n$ we have 
$$[{A^{s+1}}_{k,l}-{A^{s+1}}_{k,l'}]_+\leq [A^s_{k,l}-A^s_{k,l'}]_+ \ \mbox{ and } \ 
[{A^{s+1}}_{k,l}-{A^{s+1}}_{l',l}]_+\leq [{A^{s}}_{k,l}-{A^{s}}_{l',l}]_+.$$
This, together with the fact that $A^{s+1}_{k,l}=0$ whenever $(k,l)\in \UR(i,j)$, implies that
\begin{eqnarray*}
\sum_{\begin{array}{c}
 \vspace{-0.15cm}
 \scriptscriptstyle{1\leq k<l'<l\leq n}\\
 \vspace{-0.15cm}
 \scriptscriptstyle{(k,l)\in \UR(i,j)}\\
 \scriptscriptstyle{i\leq l'\leq j}
\end{array}} [A^s_{k,l}-A^s_{k,l'}]_+&=&
\sum_{\begin{array}{c}
 \vspace{-0.15cm}
  \scriptscriptstyle{1\leq k<l'<l\leq n}\\
   \vspace{-0.15cm}
 \scriptscriptstyle{(k,l)\in \UR(i,j)}\\
 \scriptscriptstyle{{i\leq l'\leq j}}
\end{array}} [A^s_{k,l}-A^s_{k,l'}]_+-[{A^{s+1}}_{k,l}-{A^{s+1}}_{k,l'}]_+\\
&\leq&
\sum_{1\leq k<l'<l\leq n} [A^s_{k,l}-A^s_{k,l'}]_+-[{A^{s+1}}_{k,l}-{A^{s+1}}_{k,l'}]_+.
\end{eqnarray*}
Similarly, we have 
\begin{eqnarray*}
\sum_{\begin{array}{c}
 \vspace{-0.15cm}
 \scriptscriptstyle{1\leq k<k'<l\leq n}\\
 \vspace{-0.15cm}
 \scriptscriptstyle{(k,l)\in \UR(i,j)}\\
 \scriptscriptstyle{i\leq k'\leq j}
\end{array}} [A^s_{k,l}-A^s_{k',l}]_+
&\leq&
\sum_{1\leq k<k'<l\leq n} [A^s_{k,l}-A^s_{k',l}]_+-[{A^{s+1}}_{k,l}-{A^{s+1}}_{k',l}]_+.
\end{eqnarray*}
Adding up the above two inequalities, we get,
\begin{equation}\label{eq:half1}
\sum_{\begin{array}{c}
 \vspace{-0.15cm}
 \scriptscriptstyle{1\leq k<l'<l\leq n}\\
 \vspace{-0.15cm}
 \scriptscriptstyle{(k,l)\in \UR(i,j)}\\
 \scriptscriptstyle{i\leq l'\leq j}
\end{array}} [A^s_{k,l}-A^s_{k,l'}]_++
\sum_{\begin{array}{c}
 \vspace{-0.15cm}
 \scriptscriptstyle{1\leq k<k'<l\leq n}\\
 \vspace{-0.15cm}
 \scriptscriptstyle{(k,l)\in \UR(i,j)}\\
 \scriptscriptstyle{i\leq k'\leq j}
\end{array}} [A^s_{k,l}-A^s_{k',l}]_+
\ \leq\ 
n^3(\Gamma(A^s)-\Gamma(A^{s+1})).
\end{equation}
Repeating the above argument for elements of $\LL(i,j)$, the following inequality can be derived in a  similar fashion. 
\begin{equation}\label{eq:half2}
\sum_{\begin{array}{c}
 \vspace{-0.15cm}
 \scriptscriptstyle{1\leq k<k'<l'\leq n}\\
 \vspace{-0.15cm}
 \scriptscriptstyle{(k',l')\in \LL(i,j)}\\
 \scriptscriptstyle{1\leq k\leq i}
\end{array}} [A^s_{k,l'}-A^s_{k',l'}]_++
\sum_{\begin{array}{c}
 \vspace{-0.15cm}
 \scriptscriptstyle{1\leq k'<l'<l\leq n}\\
 \vspace{-0.15cm}
 \scriptscriptstyle{(k',l')\in \LL(i,j)}\\
 \scriptscriptstyle{j\leq l\leq n}
\end{array}} [A^s_{k',l}-A^s_{k',l'}]_+
\ \leq\ 
n^3(\Gamma(A^s)-\Gamma(A^{s+1})).
\end{equation}
Combining (\ref{equality-1}), (\ref{eq:half1}), and (\ref{eq:half2}), we conclude the following.
\begin{eqnarray*}
1_{\UR}(i,j) 0_{\LL}(i,j)&=&\sum_{\begin{array}{c}
 \vspace{-0.15cm}
 \scriptscriptstyle{(k,l)\in \UR(i,j)}\\
 \vspace{-0.15cm}
 \scriptscriptstyle{(k',l')\in \LL(i,j)}\\
\end{array}}
[A^s_{k,\ell}-A^s_{k',\ell'}]_+ \\
&=& \frac{1}{2}\sum_{\begin{array}{c}
 \vspace{-0.15cm}
 \scriptscriptstyle{(k,l)\in \UR(i,j)}\\
 \vspace{-0.15cm}
 \scriptscriptstyle{(k',l')\in \LL(i,j)}\\
\end{array}}([A^s_{k,\ell}-A^s_{k',\ell}]_+ +[A^s_{k,\ell}-A^s_{k,\ell'}]_+)\nonumber\\
&+& \frac{1}{2}\sum_{\begin{array}{c}
 \vspace{-0.15cm}
 \scriptscriptstyle{(k,l)\in \UR(i,j)}\\
 \vspace{-0.15cm}
 \scriptscriptstyle{(k',l')\in \LL(i,j)}\\
\end{array}}([A^s_{k',\ell}-A^s_{k',\ell'}]_+
 +[A^s_{k,\ell'}-A^s_{k',\ell'}]_+)\\
 &\leq& \frac{n}{2}\sum_{\begin{array}{c}
 \vspace{-0.15cm}
 \scriptscriptstyle{1\leq k<l'<l\leq n}\\
 \vspace{-0.15cm}
 \scriptscriptstyle{(k,l)\in \UR(i,j)}\\
 \scriptscriptstyle{i\leq l'\leq j}
\end{array}} [A^s_{k,l}-A^s_{k,l'}]_++
\frac{n}{2}\sum_{\begin{array}{c}
 \vspace{-0.15cm}
 \scriptscriptstyle{1\leq k<k'<l\leq n}\\
 \vspace{-0.15cm}
 \scriptscriptstyle{(k,l)\in \UR(i,j)}\\
 \scriptscriptstyle{i\leq k'\leq j}
\end{array}} [A^s_{k,l}-A^s_{k',l}]_+\\
 &+& \frac{n}{2}\sum_{\begin{array}{c}
 \vspace{-0.15cm}
 \scriptscriptstyle{1\leq k<k'<l'\leq n}\\
 \vspace{-0.15cm}
 \scriptscriptstyle{(k',l')\in \LL(i,j)}\\
 \scriptscriptstyle{1\leq k\leq i}
\end{array}} [A^s_{k,l'}-A^s_{k',l'}]_++
\frac{n}{2}
\sum_{\begin{array}{c}
 \vspace{-0.15cm}
 \scriptscriptstyle{1\leq k'<l'<l\leq n}\\
 \vspace{-0.15cm}
 \scriptscriptstyle{(k',l')\in \LL(i,j)}\\
 \scriptscriptstyle{j\leq l\leq n}
\end{array}} [A^s_{k',l}-A^s_{k',l'}]_+\\
&\leq& n^4(\Gamma_1(A^s)-\Gamma_1(A^{s+1})).
\end{eqnarray*}
Note that the factor $\frac{n}{2}$ in the above inequalities appear, because every fixed row or column of $A^s$ can have at most $n$ cells from $\LL(i,j)$ or $\UR(i,j)$.
This completes the proof of the claim. 
\end{proof}
}
Finally, it is easy to observe that $\|A^s-{A^{s+1}}\|_1\leq \frac{2(0_{\LL}(i,j)+1_{UR}(i,j))}{n^2}$. Without loss of generality, assume that $1_\UR\geq 0_\LL$. Then
$$\gm(A^s)-\gm(A^{s+1})\geq  \frac{0_{LL}(i,j)1_{UR}(i,j)}{n^4}=\left( \frac{0_\LL (i,j)}{4n^2}\right)\left( 4\frac{1_{UR}(i,j)}{n^2}\right)\geq \frac{t}{4n^2}{\|A^s-A^{s+1}\|_1}.$$
 Applying the above result, we get
\begin{eqnarray*}
\gm(A)-\gm(\widehat{A})&=&\sum_{s=0}^m (\gm(A^s)-\gm(A^{s+1}))\geq \sum_{s=0}^m\frac{t}{4n^2}{\|A^s-{A^{s+1}}\|_1}\\
&\geq&\frac{t}{4n^2}\|\sum_{s=0}^m A^s-{A^{s+1}}\|_1=\frac{t}{4n^2}\|A-\widehat{A}\|_1,
\end{eqnarray*}
which finishes the proof.
\end{proof}

We now have all the ingredients to prove Theorem \ref{thm:main}, by combining the above lemma with Theorem \ref{Alg:01}. For this, parameter $t$ must be tuned so that the bounds from the preprocessing step and from Algorithm \ref{Alg:pre} give the best possible result; it appears that the best choice is  $t=\blue{4^{-2/3}}\gm(A)^{2/3}n^2$. (Note that this value is smaller than the value used in \blue{Corollary \ref{cor:algo1withbound}}.) 
For this choice of $t$,  the distance between the input matrix $A$ and the Robinson similarity matrix returned by Algorithm \ref{Alg:01} when applied to the updated version of matrix $A$ (after being processed  by Algorithm \ref{Alg:pre}) is bounded by $\blue{26} \gm (A)^{1/3}$. As the exponent on $\gm (A)$ has been decreased from $1/4$ to $1/3$, the preprocessing step leads to a substantially better Robinson  approximation.

\begin{proof}[Proof of Theorem \ref{thm:main}]
First assume that $A\in \cA_n$ is a binary matrix with $\gm (A)=\epsilon \blue{>0}$. Let $\widehat{A}$ be the output of Algorithm \ref{Alg:pre}, with threshold $t=\blue{4^{-2/3}}\epsilon^{2/3}n^2$. From Lemma \ref{lemma:prep}, 
$${\|A-\widehat{A}\|_1}\leq \blue{4^{5/3}} \epsilon^{-2/3}(\gm(A)-\gm(\widehat{A}))\leq 4^{5/3} \epsilon^{1/3}.$$ 
From Lemma \ref{lem:preproc}, we have that $\widehat{A}$ satisfies Condition (\ref{prop:URxLL}) of Theorem \ref{thm:algo1} for our choice of $t$.
Let $R$ be the output of  Algorithm \ref{Alg:01} applied to $\widehat{A}$ with parameter $t=\blue{4^{-2/3}}\epsilon^{2/3}n^2$. Then, by Theorem \ref{thm:algo1}, we have
$${\|\widehat{A}-R\|_1}\leq \frac{16\sqrt{t}+4}{n}=\blue{4^{5/3}}\epsilon^{1/3}+\frac{4}{n}.$$
Combining these inequalities, we get $\|{A}-R\|_1\leq 2\cdot 4^{5/3}\epsilon^{1/3}+\frac{4}{n}\leq \blue{26} \epsilon^{1/3}$, where we used the fact that $\gm (A)=\epsilon \geq \frac{1}{n^3}$ in the last inequality. 

Next, assume that $A\in \cA_n$ is a general matrix, and let $A=\sum_{k =1}^m (s_k-s_{k-1})A^{(k )}$ be the decomposition of $A$ into layers of binary matrices as described in Equation (\ref{eq:decomp}). Let $\epsilon_k:=\gm(A^{(k)})$, and recall that $\epsilon=\sum_{k=1}^m (s_k-s_{k-1})\epsilon_k.$ For every $1\leq k\leq m$, we apply the process described in the above paragraph to  $A^{(k )}$ with parameter $t_k=\blue{4^{-2/3}}\epsilon_k^{2/3}n^2$, and we obtain  Robinson matrices $R^{(k)}$ such that
$$\|A^{(k)}-R^{(k)}\|_1\leq\blue{26} \epsilon_k^{1/3}.$$ 
Letting $R=\sum_{k =1}^m (s_k-s_{k-1})R^{(k)}$, we have
\begin{eqnarray*}
\|A-R\|_1&\leq &\sum_{k=1}^m(s_k-s_{k-1})\|A^{(k)}-R^{(k)}\|_1
\leq  \blue{26} \sum_{k=1}^m(s_k-s_{k-1})\epsilon_k^{1/3}
\leq  \blue{26} \epsilon^{1/3},
\end{eqnarray*}
where in the last inequality we used the fact that the function $f(x)=x^{1/3}$ is concave. 

Finally,
we observe that the outputs of Algorithm \ref{Alg:pre} and Algorithm \ref{Alg:01}, when applied to a binary matrix $A$, are again binary matrices. Therefore, the Robinson approximation $R$ of Theorem \ref{thm:main} is binary, when $A$ is a binary matrix.
\end{proof} 

\section{Conclusions and further work}
We defined a parameter $\gm$ which measures how much a matrix resembles a Robinson similarity matrix. We gave a polynomial time algorithm which takes as input a symmetric matrix $A$, and finds a Robinson matrix $R$ so that the normalized $\ell^1$-distance between $A$ and $R$ is bounded by $\blue{26} \gm (A)^{1/3}$. The motivation of our work is the application to the problem of seriation of noisy data. This problem can now be approached by solving instead the optimization problem: given a matrix $A$, find a permutation $\pi$ of the rows and columns of $A$ so that $\gm (A^{\pi})$ is minimized. 

Our construction method is based on a combinatorial algorithm that runs in polynomial time. However, this may not give the best possible such Robinson approximation. As remarked in the introduction, the problem of finding the best possible Robinson approximation, with error measured in $\ell_1$ norm, can be formulated as a linear program. An open problem is whether there exists a combinatorial algorithm for this task (as there exist for the $\ell_\infty $ norm.)

In future work, we propose to study this optimization problem, to attempt to find algorithms which solve or approximate the problem, and to determine their complexity. It is well-known that the second eigenvector of the Laplacian of the matrix, also known as the Fiedler vector, is effective in finding the correct permutation in seriation without error. We propose to study the relationship between the Fiedler vector and the parameter $\gm$ . 

An immediate next step is to test our algorithm on real data, and see whether, in practice, the algorithm outperforms the theoretical bound. As well, a clever implementation of Algorithm \ref{Alg:pre} will likely lead to improved efficiency.   

\section*{Acknowledgments}

The authors thank the anonymous referees, whose suggestions greatly improved the paper.

\renewcommand{\thepage}{}
\bibliographystyle{siamplain}

\begin{thebibliography}{10}

\bibitem{atkins}
{\sc J.~E. Atkins, E.~G. Boman, and B.~Hendrickson}, {\em A spectral algorithm
  for seriation and the consecutive ones problem}, SIAM J. Comput., 28 (1999),
  pp.~297--310, \url{https://doi.org/10.1137/S0097539795285771},
  \url{http://dx.doi.org/10.1137/S0097539795285771}.

\bibitem{Barthelemy-Brucker}
{\sc J.-P. Barth\'{e}lemy and F.~Brucker}, {\em {NP}-hard approximation
  problems in overlapping clustering}, J. Classif., 18 (2001), pp.~159--183,
  \url{https://doi.org/10.1007/s00357-001-0014-1},
  \url{https://doi.org/10.1007/s00357-001-0014-1}.

\bibitem{Chen2002}
{\sc C.-H. Chen}, {\em Generalized association plots: information visualization
  via iteratively correlated matrices}, Statistica Sinica, 12 (2002),
  pp.~7--29.

\bibitem{chepoi-fichet}
{\sc V.~Chepoi, B.~Fichet, and M.~Seston}, {\em {S}eriation in the presence of
  errors: {NP}-hardness of {$\ell_\infty$}-fitting robinson structures to
  dissimilarity matrices}, J. Classification, 26 (2009), pp.~279--296.

\bibitem{Chepoi2011}
{\sc V.~Chepoi and M.~Seston}, {\em Seriation in the presence of errors: A
  factor 16 approximation algorithm for $\ell_\infty$-fitting robinson
  structures to distances}, Algorithmica, 59 (2011), pp.~521--568.

\bibitem{linearembeddings}
{\sc H.~Chuangpishit, M.~Ghandehari, M.~Hurshman, J.~Janssen, and
  N.~Kalyaniwalla}, {\em Linear embeddings of graphs and graph limits}, Journal
  of Combinatorial Theory, Series B, 113 (2015), pp.~162 -- 184,
  \url{https://doi.org/https://doi.org/10.1016/j.jctb.2015.02.002},
  \url{http://www.sciencedirect.com/science/article/pii/S0095895615000179}.

\bibitem{uniform}
{\sc H.~Chuangpishit, M.~Ghandehari, and J.~Janssen}, {\em Uniform linear
  embeddings of graphons}, European J. Combin., 61 (2017), pp.~47--68,
  \url{https://doi.org/10.1016/j.ejc.2016.09.004},
  \url{http://dx.doi.org/10.1016/j.ejc.2016.09.004}.

\bibitem{Corneil2004}
{\sc D.~G. Corneil}, {\em A simple 3-sweep {LBFS} algorithm for the recognition
  of unit interval graphs}, Discrete Applied Mathematics, 138 (2004), pp.~371
  -- 379.

\bibitem{Corneil1995}
{\sc D.~G. Corneil, H.~Kim, S.~Natarajan, S.~Olariu, and A.~P. Sprague}, {\em
  Simple linear time recognition of unit interval graphs}, Information
  Processing Letters, 55 (1995), pp.~99 -- 104.

\bibitem{Rigollet}
{\sc N.~Flammarion, C.~Mao, and P.~Rigollet}, {\em Optimal rates of statistical
  seriation}.
\newblock {arXiv}:1607.02435 [math.ST], 2016.

\bibitem{fogel}
{\sc F.~Fogel, A.~d'Aspremont, and M.~Vojnovic}, {\em Spectral ranking using
  seriation}, J. Mach. Learn. Res., 17 (2016), pp.~Paper No. 88, 45.

\bibitem{GhandJanss2019}
{\sc M.~Ghandehari and J.~Janssen}, {\em A stable parameter to quantify
  geometric structure of graphons}.
\newblock Unpublished Manuscript, 2018.

\bibitem{Hahsler2008}
{\sc M.~Hahsler, K.~Hornik, and C.~Buchta}, {\em Getting things in order: An
  introduction to the {R} package seriation}, J.~Statistical Software, 25 (3)
  (2008).

\bibitem{laurent2017}
{\sc M.~Laurent and M.~Seminaroti}, {\em A lex-bfs-based recognition algorithm
  for robinsonian matrices}, Discrete Applied Mathematics, 222 (2017), pp.~151
  -- 165, \url{https://doi.org/https://doi.org/10.1016/j.dam.2017.01.027},
  \url{http://www.sciencedirect.com/science/article/pii/S0166218X17300641}.

\bibitem{laurent2016}
{\sc M.~Laurent and M.~Seminaroti}, {\em Similarity-first search: a new
  algorithm with application to robinsonian matrix recognition}, SIAM
  J.~Discrete Math., 31 (2017), pp.~1765--1800,
  \url{https://doi.org/https://doi.org/10.1137/16M1056791}.

\bibitem{Liiv2010}
{\sc I.~Liiv}, {\em Seriation and matrix reordering methods: An historical
  overview}, Stat. Anal. Data Min., 3 (2010), pp.~70--91.

\bibitem{Looges1994}
{\sc P.~J. Looges and S.~Olariu}, {\em Optimal greedy algorithms for
  indifference graphs}, Computers {\&} Mathematics with Applications, 25
  (1993), pp.~15 -- 25.

\bibitem{lovasz-szegedy}
{\sc L.~Lov\'asz and B.~Szegedy}, {\em Limits of dense graph sequences}, J.
  Combin. Theory Ser. B, 96 (2006), pp.~933--957,
  \url{https://doi.org/10.1016/j.jctb.2006.05.002}.

\bibitem{mirkin-rodin}
{\sc B.~G. Mirkin and S.~N. Rodin}, {\em Graphs and genes}, vol.~11 of
  Biomathematics, Springer-Verlag, Berlin, 1984.
\newblock Translated from the Russian by H. Lynn Beus.

\bibitem{prea-fortin}
{\sc P.~Pr{\'e}a and D.~Fortin}, {\em An optimal algorithm to recognize
  {R}obinsonian dissimilarities}, J. Classification, 31 (2014), pp.~351--385,
  \url{https://doi.org/10.1007/s00357-014-9150-2},
  \url{http://dx.doi.org/10.1007/s00357-014-9150-2}.

\bibitem{Roberts1969}
{\sc F.~Roberts}, {\em Indifference graphs}, in Proof Techniques in Graph
  Theory, F.~Harary, ed., Academic Publishers, 1969, pp.~139--146.

\bibitem{robinson}
{\sc W.~S. Robinson}, {\em A method for chronologically ordering archeological
  deposits}, American Antiquity, 16 (1951), pp.~293--301.

\bibitem{Seminaroti2016}
{\sc M.~Seminaroti}, {\em Combinatorial Algorithms for the Seriation Problem},
  PhD thesis, Un.~Tilburg, 2016.

\bibitem{seston}
{\sc M.~Seston}, {\em A simple algorithm for recognize {R}obinsonian
  dissimilarities}, in C{OMPSTAT} 2008---{P}roceedings in {C}omputational
  {S}tatistics, Physica-Verlag/Springer, Heidelberg, 2008, pp.~241--248,
  CD--ROM.

\end{thebibliography}

\end{document}